\title[Global critical chart]
{Global critical chart for local Calabi--Yau threefolds}
\date{}
\author{Tasuki Kinjo and Naruki Masuda}
\address{Johns Hopkins University, 3400 N. Charles St., Baltimore, MD, USA}
\email{nmasuda2@jh.edu}
\address{graduate school of mathematical science, the university of tokyo, 3-8-1 komaba,
meguroku, tokyo 153-8914, japan.}
\email{tasuki.kinjo@ipmu.jp}
\newtheorem{thm}{Theorem}[section]
\newtheorem{prop}[thm]{Proposition}
\newtheorem{def-prop}[thm]{Definition-Proposition}
\newtheorem{lem}[thm]{Lemma}
\newtheorem{lemdef}[thm]{Lemma-Definition}
\newtheorem{cor}[thm]{Corollary}
\newtheorem*{thm*}{Theorem}
\theoremstyle{definition}
\newtheorem{defin}[thm]{Definition}
\newtheorem*{NaC}{Notation and Convention}
\newtheorem*{ACK}{Acknowledgement}
\newtheorem{rmk}[thm]{Remark}
\newtheorem{ex}[thm]{Example}
\DeclareMathOperator{\Spec}{Spec}
\DeclareMathOperator{\Sp}{Sp}
\newcommand{\bA}{\mathbb{A}}
\newcommand{\bC}{\mathbb{C}}
\newcommand{\bL}{\mathbb{L}}
\newcommand{\bQ}{\mathbb{Q}}
\newcommand{\mcO}{\mathcal{O}}
\newcommand{\mcU}{\mathcal{U}}
\DeclareMathOperator{\Hom}{Hom}
\DeclareMathOperator{\Tot}{Tot}
\DeclareMathOperator{\Ext}{Ext}
\DeclareMathOperator{\Sym}{Sym}
\DeclareMathOperator{\cl}{cl}
\DeclareMathOperator{\Perv}{Perv}
\DeclareMathOperator{\op}{op}
\DeclareMathOperator{\HC}{HC}
\DeclareMathOperator{\mH}{H}
\DeclareMathOperator{\HH}{HH}
\DeclareMathOperator{\Perf}{Perf}
\DeclareMathOperator{\Qcoh}{Qcoh}
\DeclareMathOperator{\CAlg}{CAlg}
\DeclareMathOperator{\Alg}{Alg}
\DeclareMathOperator{\Mod}{Mod}
\DeclareMathOperator{\RMod}{RMod}
\DeclareMathOperator{\LMod}{LMod}
\newcommand\BMod{\mathrm{BMod}}
\DeclareMathOperator{\Obj}{Obj}
\DeclareMathOperator{\Map}{Map}
\DeclareMathOperator{\Tr}{Tr}
\DeclareMathOperator{\ord}{ord}
\DeclareMathOperator{\dg}{dg}
\DeclareMathOperator{\dgcat}{dgcat}
\DeclareMathOperator{\Crit}{Crit}
\newcommand\DCrit{\mathop{\mathbf{Crit}}\nolimits}
\newcommand\ddr{d_{\mathrm{dR}}}
\newcommand{\cA}{{\mathcal A}}
\newcommand{\cO}{{\mathcal O}}
\newcommand{\cP}{{\mathcal P}}
\newcommand{\fM}{{\mathfrak M}}
\newcommand\bs{\boldsymbol}
\newcommand{\pbcorner}[1][ul]{\save*!/#1+1.6pc/#1:(1,-1)@^{|-}\restore}
\newcommand{\pocorner}[1][dr]{\save*!/#1+1.6pc/#1:(1,-1)@^{|-}\restore}
\newcommand{\eA}{{\EuScript A}}
\newcommand{\eC}{{\EuScript C}}
\newcommand{\eD}{{\EuScript D}}
\newcommand{\eE}{{\EuScript E}}
\newcommand{\eP}{{\EuScript P}}
\newcommand{\eS}{{\EuScript S}}
\newcommand\Cat{\mathop{\eC \mathrm{at}_{\infty}}}
\newcommand\PrL{\mathop{\eP \mathrm{r}^L}}
\newcommand\PrLc{\mathop{\eP \mathrm{r}^L_{\omega}}}
\newcommand\PrRc{\mathop{\eP \mathrm{r}^R_{\omega}}}
\newcommand\PrLst{\mathop{\eP \mathrm{r}^L_{\mathrm{st}}}}
\newcommand\PrLstc{\mathop{\eP \mathrm{r}^L_{\mathrm{st}, \omega}}}
\newcommand\PrR{\mathop{\eP \mathrm{r}^R}}
\newcommand\Catk{\mathop{\mathrm{LinCat}_k^{\mathrm{St}}}}
\newcommand\Catkcg{\mathop{\mathrm{LinCat}_k^{\mathrm{St}, \omega}}}
\DeclareMathOperator{\Fun}{Fun}
\DeclareMathOperator{\Mnd}{Mnd}
\DeclareMathOperator{\LFun}{LFun}
\DeclareMathOperator{\Id}{Id}
\DeclareMathOperator{\ev}{ev}
\DeclareMathOperator{\coev}{coev}
\begin{document}

\begin{abstract}
In this paper, we investigate Keller's deformed Calabi--Yau completion of the derived category of coherent sheaves on a smooth variety.
In particular, for an $n$-dimensional smooth variety $Y$, we describe the derived category of the total space of an $\omega_Y$-torsor as a certain deformed $(n+1)$-Calabi--Yau completion of the derived category of $Y$.

As an application, we investigate the geometry of the derived moduli stack of compactly supported coherent sheaves on a local curve, i.e., a Calabi--Yau threefold of the form $\mathrm{Tot}_C(N)$, where $C$ is a smooth projective curve and $N$ is a rank two vector bundle on $C$.
We show that the derived moduli stack is equivalent to the derived critical locus of a function on a certain smooth moduli space.
This result will be used by the first author and Naoki Koseki in their joint work on Higgs bundles and Gopakumar--Vafa invariants.
\end{abstract}

\maketitle

\setcounter{tocdepth}{1}
\tableofcontents

\section{Introduction}

\subsection{Motivations}

The theory of dg-category can be regarded as a non-commutative version of derived algebraic geometry.
Indeed, we can regard a derived scheme as a non-commutative space by considering its derived category of perfect complexes.
As in usual algebraic geometry, Calabi--Yau structure plays an important role in dg-category theory.
In the study of Calabi--Yau dg-category, Keller \cite{Kel11} introduced the  Calabi--Yau completion of a dg-category and its deformed version.
More precisely, for a given small dg-category $\eA$, an integer $n$ and a negative cyclic class $c \in \HC^{-}_{n-1}(\eA)$, he defined a new small dg-category
\[
\Pi_{n + 1}(\eA, c) \in \dgcat
\]
called the $(n+1)$-deformed Calabi--Yau completion of $\eA$.
It carries a natural left $(n + 1)$-Calabi--Yau structure in the sense of Brav--Dyckerhoff's paper \cite{BDI}.

It is shown in \cite{IQ18} that the undeformed Calabi--Yau completion is a non-commutative generalization of the total space of the canonical bundle on a smooth variety. In this paper, we investigate the geometric meaning of the \emph{deformed} Calabi--Yau completion.

\subsection{Results}

Let $Y$ be a quasi-projective smooth variety of dimension $n$ over the complex number field.
Take a cohomology class $c \in \mH^1(Y, \omega_Y)$.
Let $X$ be the total space of the $\omega_Y$-torsor corresponding to $c$. 

\begin{thm}[= Theorem \ref{thm:defCYcomp}]\label{thm:intromain}
Let $\Perf(Y)$ (resp. $\Perf(X)$) be the derived dg-category of perfect complexes on $Y$ (resp. $X$).
Regard $c$ as an $(n-1)$-th Hochschild homology class using the natural inclusion
$\mH^1(Y, \omega_Y) \hookrightarrow \HH_{n-1}(\Perf(Y)) $.
One can show that $c$ naturally lifts to a negative cyclic class $\tilde{c}$.
Then we have a natural Morita equivalence of dg-categories
\[
\Perf(X) \simeq \Pi_{n+1}(\Perf(Y),  \tilde{c})
\]
preserving the natural left $n+1$-Calabi--Yau structure.
\end{thm}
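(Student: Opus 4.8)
The plan is to reduce the statement to an explicit comparison of sheaves of algebras on $Y$, and then to match the Calabi--Yau structures. Since $Y$ is quasi-projective, $\Perf(Y)$ has a compact generator $G$; Keller's deformed completion depends on $\Perf(Y)$ only through its Morita class together with the negative-cyclic class $\tilde{c}\in\HC^-_{n-1}(\Perf(Y))$, and Calabi--Yau structures are Morita invariant, so it suffices to produce a quasi-isomorphism of dg-algebras $\RHom_X(\pi^*G,\pi^*G)\simeq\Pi_{n+1}(A,\tilde{c})$ for $A:=\RHom_Y(G,G)$, compatible with the Calabi--Yau data. First I would recall the undeformed picture following \cite{IQ18}: Grothendieck duality along the diagonal $\Delta\colon Y\hookrightarrow Y\times Y$ identifies the inverse dualizing bimodule $\Theta_A$ with $\omega_Y^{-1}[-n]$, so $\theta:=\Theta_A[n]$ is the invertible bimodule $(-)\otimes_{\cO_Y}\omega_Y^{-1}$, the tensor dg-algebra $T_A(\theta)$ underlying $\Pi_{n+1}(A)$ has underlying complex $\bigoplus_{k\ge 0}\RHom_Y(G,G\otimes\omega_Y^{-k})=\RHom_{\Tot_Y(\omega_Y)}(p^*G,p^*G)$ for $p\colon\Tot_Y(\omega_Y)\to Y$, and since $p$ is affine $p^*G$ is a compact generator, giving $\Perf(\Tot_Y(\omega_Y))\simeq\Pi_{n+1}(\Perf(Y))$. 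I would also explain the canonical negative-cyclic lift here: under the Hochschild--Kostant--Rosenberg decomposition $c$ lies in the summand $\mH^1(Y,\Omega_Y^n)$ of $\HH_{n-1}(\Perf(Y))$, and because $\Omega_Y^{n+1}=0$ the Connes differential and all higher differentials annihilate $c$, while the obstruction to and the indeterminacy of a lift to $\HC^-_{n-1}$ land in vanishing groups.

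Next I would describe $X$ geometrically. Viewing $c\in\mH^1(Y,\omega_Y)=\Ext^1_Y(\cO_Y,\omega_Y)$ as an extension $0\to\omega_Y\to E\to\cO_Y\to0$, the total space $X$ is the affine bundle $\Tot_Y(E)\times_{\bA^1\times Y,\,1}Y$, i.e.\ the fibre over the unit section of the surjection $\Tot_Y(E)\twoheadrightarrow\Tot_Y(\cO_Y)=\bA^1\times Y$ coming from $E\twoheadrightarrow\cO_Y$ (its class as a $\Tot_Y(\omega_Y)$-torsor being the image of $1$ under the connecting map, which is $c$). Concretely $\pi_*\cO_X=\Sym_{\cO_Y}(E^\vee)/(s-1)$, where $s\in E^\vee$ is the nowhere-vanishing section of the dual sequence $0\to\cO_Y\xrightarrow{s}E^\vee\to\omega_Y^{-1}\to0$. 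Since $\pi\colon X\to Y$ is affine, $\pi^*G$ is a compact generator of $\Perf(X)$ and $\RHom_X(\pi^*G,\pi^*G)=\RHom_Y\big(G,G\otimes_{\cO_Y}\pi_*\cO_X\big)$, so the theorem is reduced to identifying the $\cO_Y$-algebra $\pi_*\cO_X$ with $T_{\cO_Y}(\omega_Y^{-1})=\Sym_{\cO_Y}(\omega_Y^{-1})$ carrying the $\tilde{c}$-deformed differential.

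This identification is the core. Keller's deformed completion is $\big(T_{\cO_Y}(\omega_Y^{-1}),\,d_0+\partial_{\tilde{c}}\big)$, with $d_0$ the undeformed differential above and $\partial_{\tilde{c}}$ the derivation extending the degree-$(+1)$ bimodule map $\omega_Y^{-1}\to\cO_Y$ corresponding to $c$ via $\mH^1(Y,\omega_Y)\hookrightarrow\HH_{n-1}(\Perf(Y))$; and $\pi_*\cO_X=\Sym(E^\vee)/(s-1)$ is a one-parameter deformation of $\Sym(\omega_Y^{-1})=\Sym(E^\vee)/(s)$ whose Kodaira--Spencer class is $c$, so the two sides agree to first order. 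To promote this to a genuine quasi-isomorphism I would run the comparison over an affine cover $\{U_i\}$ trivializing the torsor: there $X|_{U_i}\cong\Tot_{U_i}(\omega_{U_i})$, both constructions restrict to the \emph{undeformed} completion $\Pi_{n+1}(\Perf(U_i))$ (as $c|_{U_i}=0$), and they differ only in descent data --- for $\Perf(X)$ the gluing over $U_{ij}$ is pullback along the translation $u_i\mapsto u_i+c_{ij}$ for a \v{C}ech representative $\{c_{ij}\}$ of $c$, for $\Pi_{n+1}(\Perf(Y),\tilde{c})$ it is the datum produced functorially from $\tilde{c}$. I expect these to agree, the iterated-translation corrections on the $X$-side being precisely what the cyclic refinement $\tilde{c}$ records (equivalently, gauged into the linear form $\partial_{\tilde{c}}$, using that $\omega_Y^{-1}$ is invertible and $\Omega_Y^{n+1}=0$, so the torsor carries no data beyond $c$); applying $\RHom_Y(G,G\otimes_{\cO_Y}-)$ then yields the Morita equivalence $\Perf(X)\simeq\Pi_{n+1}(\Perf(Y),\tilde{c})$.

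Finally, for the Calabi--Yau structures: $X$ is smooth of dimension $n+1$ with $\omega_{X/Y}\cong\pi^*\omega_Y^{-1}$ (as $X\to Y$ is a torsor under $\Tot_Y(\omega_Y)$), hence $\omega_X\cong\pi^*\omega_Y\otimes\pi^*\omega_Y^{-1}\cong\cO_X$, and the induced volume form gives $\Perf(X)$ its standard left $(n+1)$-Calabi--Yau structure in the sense of \cite{BDI}, while $\Pi_{n+1}(\Perf(Y),\tilde{c})$ carries Keller's canonical one. I would check that the equivalence intertwines them: both are negative-cyclic lifts of the (pulled-back) volume class in $\HH_{n+1}$, and such a lift is unique because $\HH_m(\Perf(X))=0$ for $m>n+1$, so it suffices to match their images in $\HH_{n+1}$, which one reads off from the construction (or one invokes the characterization of the Calabi--Yau structure on $\Pi_{n+1}$). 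The main obstacle is the third step: matching Keller's combinatorial deformation term with the geometric twisting of the torsor on the nose, compatibly with the cyclic refinement --- carrying out the gauge equivalence between the iterated-translation descent datum and the linear derivation $\partial_{\tilde{c}}$, and tracking it through the Calabi--Yau data, is where the real work lies; the negative-cyclic bookkeeping in the last step is a secondary but genuine point.
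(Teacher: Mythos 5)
Your overall reduction is sound and partly parallels the paper: both arguments exploit that $\pi\colon X\to Y$ is affine to reduce the theorem to identifying the $\cO_Y$-algebra $\pi_*\cO_X$ with the appropriately deformed tensor algebra $T_{\cO_Y}(\omega_Y^{-1}[-1])$, and your description of the negative cyclic lift $\tilde c$ (injectivity of $\HC^-_{n-1}\to\HH_{n-1}$ plus vanishing of the Connes differential on $\mH^1(Y,\Omega_Y^n)$) is exactly the paper's. However, the step you yourself flag as ``where the real work lies'' --- matching Keller's deformation term with the geometric twisting of the torsor --- is precisely the content of the theorem, and your proposed route for it (\v{C}ech cover trivializing the torsor, translation cocycles $u_i\mapsto u_i+c_{ij}$, a ``gauge equivalence'' with the derivation $\partial_{\tilde c}$) is left as an expectation rather than an argument. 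Carrying it out would require nontrivial extra infrastructure: you would need to show that $\Pi_{n+1}(\Perf(Y),\tilde c)$ satisfies Zariski descent, identify its restrictions to the $U_i$ with the undeformed completions, extract from Keller's global construction the induced gluing data over $U_{ij}$ (together with all higher coherences on triple and higher overlaps --- this is an $\infty$-categorical gluing, not just a cocycle condition), and only then compare with the translation descent data of $X$. None of this is set up in your sketch.

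The paper avoids all of this with a different device: it presents $X$ as the fiber product $Y\times_{[Y/\omega_Y]}Y$, where $[Y/\omega_Y]$ is the quotient by the trivial action of $\Tot_Y(\omega_Y)$, one map is the zero section and the other is the classifying map of $c$. Pushing structure sheaves down to $Y$ exhibits $\pi_*\cO_X$ as receiving a commutative square from $\eA_1\rightrightarrows\cO_Y$ in $\Alg(\Qcoh(Y))$, which matches on the nose the pushout square of algebras defining the deformed completion (via Lurie's identification of module categories over pushouts of algebras). The only thing left to check is that this square of algebras is a pushout, and that is a local statement, verified on affines where $c=0$ and both sides become the free algebra on $\omega_Y^{-1}[-1]$. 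In other words, the paper's global stacky presentation converts your ``gauge equivalence'' problem into a purely local, undeformed verification; your first-order Kodaira--Spencer observation and the presentation $\pi_*\cO_X=\Sym(E^\vee)/(s-1)$ are consistent with this but do not by themselves close the gap. Your treatment of the Calabi--Yau structures (uniqueness of the negative cyclic lift of the volume class since $\HH_m(\Perf(X))=0$ for $m>n+1$) is a legitimate alternative to the paper's reduction to the affine case, where it invokes the $c=0$ computation and the sheaf property of $\HC^-_{n+1}$.
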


The above theorem can be applied in the following situation.
Let $C$ be a quasi-projective smooth projective curve and $N$ be a vector bundle over $Z$ of rank two such that there exists an isomorphism 
$\det(N) \cong \omega_C$.
Assume that we are given a short exact sequence of locally free sheaves on $C$
\[
0 \to L_1 \to N \to L_2 \to 0
\]
such that $L_1$ and $L_2$ are line bundles.
Then $\Tot_{C}(N)$ is the total space of an $\omega_{\Tot_{C}(L_2)}$-torsor on $\Tot_{C}(L_2)$.
Therefore Theorem \ref{thm:intromain} implies that
$\Perf(\Tot_{C}(N))$ is Morita equivalent to a certain deformed $3$-Calabi--Yau completion of $\Perf(\Tot_{C}(L_2))$.
Combining this result and a theorem due to Bozec--Calaque--Scherotzke \cite[Corollary 6.19]{BCS20}, we obtain the following Corollary:

\begin{cor}[={Theorem \ref{thm:crit}}]\label{cor:intro}
Let $\bs{\fM}_{\Tot_{C}(N)}$ (resp. $\bs{\fM}_{\Tot_{C}(L_2)}$) be the derived moduli stack of compactly supported coherent sheaves on $\Tot_{C}(N)$ (resp. $\Tot_{C}(L_2)$).
Then there exists a regular function $f \in \Gamma(\bs{\fM}_{\Tot_{C}(L_2)}, \mcO_{\bs{\fM}_{\Tot_{C}(L_2)}})$ such that there exists a natural equivalence of $-1$-shifted symplectic derived Artin stacks
\[
\bs{\fM}_{\Tot_{C}(N)} \simeq \DCrit(f).
\]
\end{cor}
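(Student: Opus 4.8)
The plan is to deduce the statement from Theorem~\ref{thm:intromain} combined with the description of moduli of objects in a deformed Calabi--Yau completion due to Bozec--Calaque--Scherotzke. First I would spell out the geometry underlying the statement. Write $\pi\colon \Tot_C(L_2)\to C$ for the projection; the relative cotangent sequence of $\pi$ gives $\omega_{\Tot_C(L_2)}\cong \pi^*(\omega_C\otimes L_2^{\vee})$, and since $\det N\cong \omega_C$ forces $L_1\cong \omega_C\otimes L_2^{\vee}$, this equals $\pi^*L_1$. The surjection $N\twoheadrightarrow L_2$ induces a morphism $p\colon \Tot_C(N)\to \Tot_C(L_2)$ which is fibrewise a torsor under $\ker(N\to L_2)=L_1$, so $p$ exhibits $\Tot_C(N)$ as the total space of the $\omega_{\Tot_C(L_2)}$-torsor whose class $c\in \mH^1(\Tot_C(L_2),\omega_{\Tot_C(L_2)})$ is the image of the extension class of $0\to L_1\to N\to L_2\to 0$ in $\Ext^1_C(L_2,L_1)$ under pullback along $\pi$. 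Applying Theorem~\ref{thm:intromain} with $Y=\Tot_C(L_2)$ (a smooth quasi-projective surface, so $n=2$) and $X=\Tot_C(N)$ yields a Morita equivalence
\[
\Perf(\Tot_C(N))\;\simeq\;\Pi_3\bigl(\Perf(\Tot_C(L_2)),\tilde c\bigr)
\]
intertwining the left $3$-Calabi--Yau structures, where $\tilde c\in \HC^{-}_{1}(\Perf(\Tot_C(L_2)))$ is the negative cyclic lift of $c$.

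Next I would pass to derived moduli stacks of objects. Since a Morita equivalence induces an equivalence of derived moduli stacks of objects, the above identifies the moduli stack of objects of $\Pi_3(\Perf(\Tot_C(L_2)),\tilde c)$ with that of $\Perf(\Tot_C(N))$. To locate $\bs{\fM}_{\Tot_C(N)}$ inside the latter I would use that the condition of being a compactly supported coherent sheaf is preserved: because $p$ is affine and $C$ is proper, a coherent sheaf on $\Tot_C(N)$ has proper support if and only if its pushforward along $\pi\circ p$ to $C$ is coherent, and the analogous criterion holds on $\Tot_C(L_2)$; concretely, via the spectral correspondence the compactly supported coherent sheaves on $\Tot_C(L_2)$ are the $L_2$-twisted Higgs sheaves on $C$. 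This identifies $\bs{\fM}_{\Tot_C(N)}$ with the derived moduli stack of the corresponding objects of $\Pi_3(\Perf(\Tot_C(L_2)),\tilde c)$, and $\bs{\fM}_{\Tot_C(L_2)}$ with the matching substack $\bs{\fM}_{\Tot_C(L_2)}\hookrightarrow \mathcal{M}_{\Perf(\Tot_C(L_2))}$ of the moduli of objects of $\Perf(\Tot_C(L_2))$.

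Finally I would invoke \cite[Corollary~6.19]{BCS20} with $\eA=\Perf(\Tot_C(L_2))$ and the class $\tilde c$: the derived moduli stack of objects of $\Pi_3(\eA,\tilde c)$ is the derived critical locus $\DCrit(f_{\tilde c})$ of the regular function $f_{\tilde c}$ on $\mathcal{M}_{\eA}$ attached to $\tilde c$, compatibly with the canonical $(-1)$-shifted symplectic structures. Taking $f$ to be the restriction of $f_{\tilde c}$ along $\bs{\fM}_{\Tot_C(L_2)}\hookrightarrow \mathcal{M}_{\eA}$ and combining with the previous paragraph gives the asserted equivalence $\bs{\fM}_{\Tot_C(N)}\simeq \DCrit(f)$ of $(-1)$-shifted symplectic derived Artin stacks, with $f\in \Gamma(\bs{\fM}_{\Tot_C(L_2)},\mcO_{\bs{\fM}_{\Tot_C(L_2)}})$.

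The torsor computation and the application of Theorem~\ref{thm:intromain} are routine; the substantive work lies at the interface with \cite{BCS20}. One must carefully compare the geometric moduli stack $\bs{\fM}_{\Tot_C(N)}$ of compactly supported coherent sheaves with the abstract moduli stack of objects of the dg-category $\Pi_3(\Perf(\Tot_C(L_2)),\tilde c)$, checking in particular that the Morita equivalence respects the compactly supported coherent locus, so that $f_{\tilde c}$ may be restricted to it. The most delicate point is to verify that the left $3$-Calabi--Yau structure transported by Theorem~\ref{thm:intromain} induces, via the moduli-of-objects functor, precisely the standard $(-1)$-shifted symplectic form on $\bs{\fM}_{\Tot_C(N)}$, while on the other side \cite[Corollary~6.19]{BCS20} produces the standard $(-1)$-shifted symplectic form on the derived critical locus; matching these two symplectic structures is the main obstacle.
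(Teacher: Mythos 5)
Your overall architecture (torsor class $\Rightarrow$ Theorem \ref{thm:intromain} $\Rightarrow$ moduli of objects $\Rightarrow$ \cite[Corollary 6.19]{BCS20}) matches the paper's, but there is a genuine gap at the point where you invoke \cite{BCS20}. As stated in the paper (Theorem \ref{thm:BCS}), that result does not apply to an arbitrary degree-one negative cyclic class $\tilde c$: it applies to deformed completions of the form $\Pi_3(\eC, \delta c')$ where $c' \colon k \to \HH(\eC)$ is a \emph{zeroth} Hochschild homology class and $\delta$ is induced by the mixed differential, and the function $f$ is manufactured from $c'$ (its value at a point $x \colon \eC \to \Mod_k$ is the image of $c'$ under $\HH(\eC) \to \HH(\Mod_k) \simeq k$). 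There is no ``regular function $f_{\tilde c}$ attached to $\tilde c$'' in general. So the substantive step --- which is in fact the bulk of the paper's proof of Theorem \ref{thm:crit} --- is to produce a class $c' \in \HH_0(\Qcoh(Y))$ with $\delta c' = c$, where $c \in H^1(Y,\omega_Y) \subset \HH_1(\Qcoh(Y))$ is the torsor class (by the injectivity of $\HC^-_1(\Qcoh(Y)) \to \HH_1(\Qcoh(Y))$ from \eqref{eq:HCQcoh2}, this suffices at the level of negative cyclic lifts). The paper does this explicitly: it sets $c' = \tfrac12\,\tau(\alpha)$, where $\alpha \in \Ext^1_C(L_2,L_1)$ is the extension class and $\tau$ lands in $H^1(Y,\Omega_Y) \subset \HH_0(\Qcoh(Y))$, then verifies $c = \delta c'$ by describing $c$ via the extension $0 \to \cO_Y \to \pi_*\cO_X \to \pi_*\cO_X/\cO_Y \to 0$ and chasing a diagram in which the mixed differential becomes the de Rham differential under HKR (this is also where the factor $1/2$ and, ultimately, the formula $f = \tfrac12\,\alpha\cdot\Tr(\phi^2)$ of Proposition \ref{prop:potential} come from). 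Your proposal omits this entirely, so you never actually obtain the function $f$.

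A secondary remark: you single out the matching of the transported left $3$-Calabi--Yau structure with the standard $(-1)$-shifted symplectic form as ``the main obstacle,'' but in the paper this is absorbed into the statements already proved --- Theorem \ref{thm:defCYcomp} asserts the equivalence $\Qcoh(X) \simeq \Pi_3(\Qcoh(Y), \tilde c)$ \emph{as left $3$-Calabi--Yau categories}, and Theorem \ref{thm:BCS} is already an equivalence of $(-1)$-shifted symplectic stacks --- so no further comparison is needed there. Similarly, the identification of $\bs{\fM}_{\Tot_C(N)}$ with the moduli of objects of $\Qcoh(\Tot_C(N))$ is built into the paper's conventions ($\fM_{\Qcoh(X)}$ parametrizes compactly supported perfect complexes). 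The real work you are missing is the Hochschild-theoretic computation $c = \delta c'$.
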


\subsection{Applications to Cohomological Donaldson--Thomas theory}

Corollary \ref{cor:intro} has a striking application in cohomological Donaldson--Thomas (CoDT) theory. To explain this, we briefly review CoDT theory here.

Donaldson--Thomas (DT) theory studies DT invariants of a  Calabi--Yau threefold $X$ which virtually count semistable sheaves on $X$. More precisely, for a homology class $\gamma \in H_*(X)$, we can define the Donaldson--Thomas invariant
\[
\mathrm{DT}_{\gamma}(X) \in \bQ
\]
which virtually counts semistable sheaves with the Chern character $\gamma$.
Assume that all semistable sheaves on $X$ with the Chern character $\gamma$ is stable and we are given an orientation (in the sense of \cite[Definition 2.57]{Joy15}) of the moduli stack $\fM_{X, \gamma}$ of stable sheaves on $X$ with the Chern character $\gamma$.
Then it follows from \cite[Theorem 6.9]{BBDJS15} that there exists a perverse sheaf 
\[
\varphi_{M_{X, \gamma}} \in \Perv(M_{X, \gamma})
\]
on the good moduli space of stable sheaves on $X$ with the Chern character $\gamma$ such that we have an equality
\[
\mathrm{DT}_{\gamma}(X) = \sum_{i} (-1)^i \dim \mH^i(M_{X, \gamma}, \varphi_{M_{X, \gamma}}).
\]
For general $\gamma$ with a given orientation of the moduli stack $\fM_{X, \gamma}$ of semistable sheaves on $X$ with the Chern character $\gamma$, it also follows from \cite[Theorem 6.9]{BBDJS15} that there exists a perverse sheaf 
\[
\varphi_{\fM_{X, \gamma}} \in \Perv(\fM_{X, \gamma})
\]
which recovers the Donaldson--Thomas invariant $\mathrm{DT}_{\gamma}$.
CoDT theory aims to study the perverse sheaf $\varphi_{\fM_{X, \gamma}}$.
At the time of this writing, almost nothing is known in CoDT theory for Calabi--Yau threefolds.

We can also define CoDT invariants for quivers with potentials whose dimension virtually count semistable representations of their Jacobi algebras.
In contrast to the Calabi--Yau threefold case, CoDT theory for quivers with potentials is well-developed (see e.g.  \cite{KS11,DM20}) and has a striking applications in representation theory (see e.g. \cite{Dav18}).
The main obstruction of CoDT theory for Calabi--Yau threefolds is that the moduli stack $\fM_{X, \gamma}$ does not necessarily have a global critical locus description, whereas the moduli stack of representations of a Jacobi algebra does.

Now let us return back to the statement of Corollary \ref{cor:intro}.
It states that the moduli stack of semistable sheaves on a local curve, i.e., Calabi--Yau threefold of the form 
\[
X = \Tot_C(N)
\]
where $C$ is a smooth projective curve has a global critical locus description.
This fact and several observations imply that the argument of Davison--Meinhardt's paper \cite{DM20} works for local curves and it is possible to establish a foundation of  CoDT theory for local curves.
When we have $N = \cO_C \oplus \omega_C$, the CoDT theory for $\Tot_C(N)$ is closely related to non-abelian Hodge theory (see \cite[\S 5]{Kin21}).
Therefore our result also have an application to non-abelian Hodge theory.
This point of view will be investigated by the first author and Naoki Koseki \cite{KK21}.

\subsection{Strategy of the proof}

The main point of the proof of Theorem \ref{thm:intromain} is to work with $k$-linear presentable stable $\infty$-categories rather than small dg-categories (e.g. we work with the stable $\infty$-category of quasi-coherent sheaves rather than the dg-category of perfect complexes).
We introduce a definition of the deformed Calabi--Yau completion of $k$-linear stable $\infty$-categories as the module category over a certain monad.
An advantage of this definition is that we can use the Barr--Beck--Lurie theorem to construct an equivalence between other $\infty$-categories.
For example, one can use the Barr--Beck--Lurie theorem to show that our definition of the deformed Calabi--Yau completion is compatible with Keller's original definition.
Theorem \ref{thm:intromain} is also a simple application of  the Barr--Beck--Lurie theorem.

\subsection{Relation to earlier works}

\begin{enumerate}
    \item In \cite[\S 3.5]{Hit19}, Hitchin studies the critical locus of a linear function on the Hitchin base in (a certain open subset of) the moduli space of stable $\mathrm{SL}(2, \bC)$-Higgs bundles. He shows that points in the critical locus corresponds to a one-dimensional sheaves on a certain non-compact Calabi--Yau threefold.
    
    Corollary \ref{cor:intro} can be regarded as a $\mathrm{GL}(n, \bC)$-version of this result.
    Our result is stated as an equivalence of $-1$-shifted symplectic derived Artin stacks, which is crucial for applications in Donaldson--Thomas theory.
    
    \item In \cite[Theorem 4.5]{MS21}, Maulik and Shen describe the moduli space of stable Higgs bundles on $C$ as the critical locus of a certain linear function of the Hitchin base for $L$-twisted Higgs bundles where $\deg L = 2g - 1$.
    
    Corollary \ref{cor:intro} recovers this result as follows:
    Let $N$ be the rank two vector bundle given by the non-trivial extension
    \[
    0 \to K_C \to N \to \cO_C \to 0.
    \]
    Then using Corollary \ref{cor:intro},
    one can show that the critical locus of Maulik--Shen's function is isomorphic to the moduli space of one-dimensional stable sheaves on $\Tot_C(N)$.
    Further, one can see that the closed immersion $\Tot_C(K_C) \hookrightarrow \Tot_C(N)$ induces an isomorphism of the moduli spaces of one-dimensional sheaves.
    Combining theses isomorphisms, we obtain the desired claim.
\end{enumerate}

\subsection{Structure of the paper}
The paper is organized as follows:

In Section \ref{sec:dg} we recall some basic definitions and results on $k$-linear stable $\infty$-categories.

In Section \ref{sec:defCY} we introduce the deformed Calabi--Yau completion of a $k$-linear stable $\infty$-categories.

In Section \ref{sec:main} we prove Theorem \ref{thm:intromain}.

In Section \ref{sec:app} we apply Theorem \ref{thm:intromain} to prove that the moduli stack of coherent sheaves on a local curve is described as a global critical locus.

\begin{ACK}

The first author is very grateful to Yukinobu Toda for suggesting the possibility that the moduli stacks of coherent sheaves on local curves are described as global critical loci.
He is also grateful to Naoki Koseki for the collaboration on the companion paper \cite{KK21}.
We thank Junliang Shen and Yukinobu Toda for their comments on the previous version of this article.

T.K. was supported by WINGS-FMSP
program at the Graduate School of Mathematical Science, the University of Tokyo and JSPS KAKENHI Grant number JP21J21118.

\end{ACK}

\begin{NaC}
\begin{itemize}

\item
Throughout the paper, we fix an algebraically closed field $k$ of characteristic zero.

\item
The $\infty$-category of spaces is denoted by $\eS$.

\item
The $\infty$-category of  $\infty$-categories is denoted by $\Cat$.

\item
For $\infty$-categories $\eC_1$ and $\eC_2$,
we let $\Fun(\eC_1, \eC_2)$ denote the functor $\infty$-category and $\LFun(\eC_1, \eC_2) \subset \Fun(\eC_1, \eC_2)$ the 
full subcategory spanned by left adjoint functors.

\item For a symmetric monoidal $\infty$-category $\eC$, we let $\CAlg(\eC)$ and $\Alg(\eC)$ denote the $\infty$-category of commutative algebras in $\eC$ and the $\infty$-category of associative algebras in $\eC$ respectively.

\end{itemize}

\end{NaC}

\section{Preliminaries on \texorpdfstring{$k$}{k}-linear stable \texorpdfstring{$\infty$}{infty}-categories}\label{sec:dg}

\subsection{Basic concepts in $\infty$-category theory}

In this section, we recall some basic concepts in $\infty$-category theory.
Standard references are \cite{HTT} and \cite{HA}.

An $\infty$-category $\eC$ with a zero object is called \textbf{stable} 
if it admits finite limits and colimits and a square in $\eC$ is a pushout square if and only if it is a pullback square.
The homotopy category of a stable $\infty$-category carries a natural triangulated structure.

An $\infty$-category $\eC$ is called \textbf{presentable} if it admits all small colimits and it satisfies a certain set-theoretic assumption called accessibility (see \cite[\S 5.5]{HTT} for the detail).
It is shown in \cite[Corollary 5.5.2.4]{HTT} that a presentable $\infty$-category admits all small limits.
The \textbf{adjoint functor theorem} \cite[Corollary 5.5.2.9]{HTT} states that a functor between presentable $\infty$-categories $F \colon \eC \to \eD$ has a right adjoint precisely when it preserves small colimits, and has a left adjoint precisely when it is accessible and it preserves small limits.
For presentable $\infty$-categories $\eC_1$ and $\eC_2$, the $\infty$-category $\LFun(\eC_1, \eC_2)$ is again presentable (see \cite[Proposition 5.5.3.8]{HTT}).

An object $x$ in an $\infty$-category $\eC$ is called \textbf{compact} if the functor 
\[
\Map_{\eC}(x, -) \colon \eC \to \eS
\]
preserves small filtered colimits.
An $\infty$-category $\eC$ is \textbf{compactly generated} if it admits small colimits and it is generated under colimits by a set of compact objects. Compactly generated $\infty$-categories are presentable.

We let $\PrL \subset \Cat$ (resp. $\PrR \subset \Cat$) denote the subcategory consisting of presentable $\infty$-categories and left (resp. right) adjoint functors.
We have a natural equivalence $(\PrL)^\mathrm{op} \simeq \PrR$. 
We let $\PrLc \subset \PrL$ (resp. $\PrRc \subset \PrR$) denote the subcategory consisting of compactly generated $\infty$-categories and left adjoint functors that preserve the compact objects (resp. right adjoint functors which preserves small filtered colimits).
The above equivalence restricts to $(\PrLc)^{\op} \simeq \PrRc$.
We let $\PrLst \subset \PrL$ denote the full subcategory spanned by presentable stable $\infty$-categories.
We define $\PrLstc \coloneqq \PrLc \cap \PrLst$.

In \cite[\S 4.8]{HA}, Lurie defines a natural closed symmetric monoidal structure on $\PrL$ with $\LFun$ being the internal hom, i.e., the tensor product is characterized by $\LFun(\eC\otimes \eD, \eE)\simeq \LFun(\eC, \LFun(\eD, \eE))$. The unit is the $\infty$-category $\eS$ of spaces.

It is shown in \cite[\S 4.8.2]{HA} that $\PrLst$ inherits a symmetric monoidal structure from that of $\PrL$.
The unit is the $\infty$-category $\Sp$ of spectra.
A compactly generated stable $\infty$-category is dualizable in $\PrLst$ (see \cite[Proposition D.7.2.3]{SAG}).

\subsection{$k$-linear stable $\infty$-categories}

Recall that we have fixed an algebraically closed field $k$ of characteristic zero.
We let $\Mod_k$ denote the derived $\infty$-category of differential graded $k$-modules.
The $\infty$-category $\Mod_k$ is stable and compactly generated, and its symmetric monoidal structure defines a commutative algebra object $\Mod_k \in \CAlg(\PrLstc)$.
We define $\Catk \coloneqq \Mod_{\Mod_k}(\PrLst)$ and call its object a \textbf{$k$-linear stable $\infty$-category} and its morphism a \textbf{$k$-linear functor}.
The $\infty$-category $\Catk$ inherits a symmetric monoidal closed structure from $\PrL$. 
Explicitly, the relative tensor product is the colimit of the simplicial object $\eC_1\otimes \Mod_k^{\otimes \bullet}\otimes \eC_2$ in $\PrL$, 
and the internal hom, which is denoted by $\LFun_k(\eC_1, \eC_2) $, is the limit of the cosimplicial object $\LFun(\eC_1\otimes \Mod_k^{\otimes\bullet}, \eC_2)\simeq \LFun(\eC_1, \LFun(\Mod_k^{\otimes\bullet}, \eC_2))$. 
The space of objects (the maximal sub $\infty$-groupoid) of $\LFun_k(\eC_1, \eC_2)$ is $\Map_{\Catk}(\eC_1, \eC_2)$.

We let $\Catkcg \coloneqq \Catk\times_{\PrLst} \PrLstc \subset \Catk$ denote the subcategory consisting of compactly generated $k$-linear stable $\infty$-categories and $k$-linear functors preserving compact objects. This category is equivalent to $\Mod_{\Mod_k}(\PrLstc)$.

The following statement is a consequence of \cite[Capter 1, 9.3.6 and 3.5.3]{GR17}:

\begin{prop}\label{prop:adjlinear}
Let $F \colon \eC \to \eD$ be a $k$-linear functor (in particular, it is a morphism of $\PrL$).
\begin{enumerate}
\item Assume that the right adjoint functor $F^{R}$ of $F$ preserves small colimits. Then $F^{R}$ has a natural $k$-linear structure.
\item Assume that $F$ has a left adjoint functor $F^{L}$. 
Then $F^{L}$ has a natural $k$-linear structure.
\end{enumerate}
\end{prop}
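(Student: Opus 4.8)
The plan is to split each statement into a formal part and a ``detect on a generator'' part. The formal part is that an adjoint of a module functor carries a canonical \emph{lax} (for a right adjoint) resp.\ \emph{oplax} (for a left adjoint) module structure; this is precisely the content of the cited results \cite[Ch.~1, 3.5.3 and 9.3.6]{GR17}, and is where working with $\infty$-categories rather than triangulated categories is essential. The second part upgrades this lax/oplax structure to a strict one, using that $\Mod_k$ is generated under colimits and shifts by its unit object $k$.

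Write $A \coloneqq \Mod_k$, regarded as an algebra object of $\PrLst$, so that $\eC$ and $\eD$ are left $A$-modules and $F$ comes equipped with coherent equivalences $F(a \otimes c) \simeq a \otimes F(c)$ for $a \in A$, $c \in \eC$. In case (1), set $G \coloneqq F^R$; this exists because $F$ preserves colimits. By the module-adjunction formalism of \cite{GR17}, $G$ acquires a canonical lax $A$-linear structure, whose underlying comparison maps are the natural maps
\[
\ell_{a,d}\colon a \otimes G(d) \longrightarrow G(a \otimes d), \qquad a \in A,\ d \in \eD,
\]
adjoint to $F\bigl(a \otimes G(d)\bigr) \simeq a \otimes FG(d) \to a \otimes d$ (counit in the second tensor factor). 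In case (2), the left adjoint $F^L$ exists by hypothesis and is automatically colimit-preserving, and dually it acquires a canonical oplax $A$-linear structure with comparison maps
\[
o_{a,c}\colon F^L(a \otimes c) \longrightarrow a \otimes F^L(c)
\]
adjoint to $a \otimes c \to a \otimes FF^L(c) \simeq F\bigl(a \otimes F^L(c)\bigr)$ (unit in the second factor).

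It remains to check that $\ell_{a,d}$ and $o_{a,c}$ are equivalences: then $G$ and $F^L$ are strictly $A$-linear, hence $k$-linear functors, both being colimit-preserving ($G$ by hypothesis, $F^L$ for free). I argue for $\ell$; the case of $o$ is word for word the same. Fix $d \in \eD$ and consider the two functors $\Mod_k \to \eC$ sending $a$ to $a \otimes G(d)$ and to $G(a \otimes d)$ respectively. Both preserve small colimits — the first since the tensor product on $\PrLst$ preserves colimits separately in each variable, the second because moreover $G$ preserves colimits — and both are exact, hence preserve shifts. Therefore the full subcategory of $\Mod_k$ on the objects $a$ for which $\ell_{a,d}$ is an equivalence is closed under small colimits and shifts; by the unit coherence of the $A$-module structure the map $\ell_{k,d}$ is identified with $\mathrm{id}_{G(d)}$, so this subcategory contains $k$, and since $\Mod_k$ is generated under colimits and shifts by $k$ it must be all of $\Mod_k$. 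Hence $\ell_{a,d}$ is an equivalence for every $a$ and $d$, proving (1); the identical argument, using that $o_{k,c}$ is identified with $\mathrm{id}_{F^L(c)}$, proves (2).

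The only genuinely non-formal input is the construction in the second paragraph — promoting the naive comparison maps $\ell_{\bullet,\bullet}$ and $o_{\bullet,\bullet}$ to a homotopy-coherent lax resp.\ oplax $A$-module structure on the adjoint, as opposed to merely writing down the maps — which is what \cite{GR17} supplies. Everything else is the soft generation argument, whose one point of substance is that it really does require the colimit-preservation hypothesis in (1): without it the functor $a \mapsto G(a \otimes d)$ need not preserve colimits, and this asymmetry is exactly why no such hypothesis is needed in (2), where a left adjoint preserves colimits automatically.
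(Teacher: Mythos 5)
Your proof is correct. The paper itself gives no argument at all for this proposition --- it is stated as ``a consequence of \cite[Chapter 1, 9.3.6 and 3.5.3]{GR17}'' --- and what you have written (the canonical lax/oplax $\Mod_k$-module structure on the adjoint from the GR17 formalism, followed by checking the comparison maps are equivalences by colimit- and shift-closure from the generator $k$, using exactness for closure under shifts in both directions and the colimit hypothesis on $F^R$ in case (1)) is precisely the standard content behind that citation, so this is the same approach with the details supplied rather than a different route.
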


\subsection{Dg-categories and $k$-linear stable $\infty$-categories}

Here we recall the relation between small dg-categories and $k$-linear stable $\infty$-categories.

A \textbf{small dg-category} is a small category enriched over the symmetric monoidal category of complexes of $k$-vector spaces.
In other words, a small dg-category $\eA$ consists of a set of objects $\Obj(\eA)$, a dg-vector space of morphisms $\eA(a, a')$ for each $a, a' \in \Obj(\eA)$, and the identity and the composition maps satisfying the associativity relation.
A \textbf{dg-functor} $F \colon \eA_1 \to \eA_2$ between small dg-categories is given by a map of sets $F \colon \Obj(\eA_1) \to \Obj(\eA_2)$ and maps of dg-vector space 
$\eA_1(a, a') \to \eA_2(F(a), F(a'))$ preserving the identity and the composition maps.

For small dg-categories $\eA_1$ and $\eA_2$, the tensor product $\eA_1 \otimes_k \eA_2$ is defined as follows:
\begin{itemize}
    \item The set of objects $\mathrm{Obj}(\eA_1 \otimes_k \eA_2)$ is the product $\mathrm{Obj}(\eA_1) \times \mathrm{Obj}(\eA_2)$.
    \item For a pair of objects $(x, y), (x', y') \in \mathrm{Obj}(\eA_1 \otimes_k \eA_2)$, the morphism dg-vector space is defined by the tensor product
    \[
    \mathrm{Mor}((x, y), (x', y')) =  \mathrm{Mor}(x, y) \otimes_{k} \mathrm{Mor}(x', y').
    \]
    \item The composition is defined in the natural manner.
\end{itemize}
We can also define the internal mapping space $\Fun_{\dg}(\eA_1, \eA_2)$ as follows:
\begin{itemize}
    \item The set of objects $\Fun_{\dg}(\eA_1, \eA_2)$ is the set of dg-functors.
    \item For a pair of objects $F, G \in \mathrm{Obj}(\Fun_{\dg}(\eA_1, \eA_2))$, the morphism dg-vector space $\mathrm{Mor}(F, G)$ is defined as the dg-vector space of natural transformations.
    \item The composition is defined in the natural manner.
\end{itemize}

We let $\Mod_k^{\dg}$ the dg-category of complexes of $k$-vector spaces.
For a small dg-category $\eA$, we define
\[
\RMod^{\dg}_{\eA} \coloneqq \Fun_{\dg}(\eA^{\op}, \Mod_k^{\dg})
\]
and an object in $\RMod^{\dg}_{\eA}$ is called \textbf{right $\eA$-module}.
We let $\RMod_{\eA}^{\ord}$ the underlying ordinary category of $\RMod^{\dg}_{\eA}$.
The category $\RMod_{\eA}^{\ord}$ carries a combinatorial model structure induced by the projective model structure on $\Mod_{k}^{\ord}$.
The weak equivalences of right $\eA$-modules with respect to this model structure are the objectwise quasi-isomorphisms.
We define the derived $\infty$-category $\RMod_{\eA}$ by the underlying $\infty$-category of the model category $\RMod_{\eA}^{\ord}$, i.e., the $\infty$-categorical localization
\[
\mathcal{N}(\RMod_{\eA}^{\ord})[W^{-1}]
\]
where $\mathcal{N}$ denotes the ordinary nerve functor and $W$ is the class of quasi-isomorphisms.

Let $\eA_1$ and $\eA_2$ be small dg-categories.
A dg-functor $F \colon \eA_1 \to \eA_2$ is called \textbf{Morita equivalence} if the restriction functor
\[
F^* \colon \RMod_{\eA_2} \to \RMod_{\eA_1}
\]
defines an equivalence of $\infty$-categories.

For small dg-categories $\eA_1$ and $\eA_2$, we define the dg-category ${}_{\eA_1} \BMod_{\eA_2}^{\dg}$ of \textbf{$\eA_1$-$\eA_2$-bimodules} by
\[
{}_{\eA_1} \BMod_{\eA_2}^{\dg} \coloneqq \RMod_{\eA_1^{\op} \otimes_k \eA_2}^{\dg}.
\]
For an $\eA_1$-$\eA_2$-bimodule $M$ and objects $a_1 \in \eA_1$ and $a_2 \in \eA_2$,
we let ${}_{a_1} M_{a_2}$ the value of the functor $M$ at $a_1 \otimes a_2 \in \Obj(\eA_1^{\op} \otimes_k \eA_2)$.

Let $\eA_i$ ($i=1, 2, 3$) be a small dg-category and $M$ (resp. $N$) be an $\eA_1$-$\eA_2$-bimodule (resp. $\eA_2$-$\eA_3$-bimodule).
We define the $\eA_1$-$\eA_3$-bimodule $M \otimes_{\eA_2} N$ in a natural way such that for $a_1 \in \Obj(\eA_1)$ and $a_3 \in \Obj(\eA_3)$, the following equality holds:
\[
{}_{a_1}(M \otimes_{\eA_2} N)_{a_3} = \int^{a_2 \in \Obj(\eA_2)} ({}_{a_1}M_{a_2}) \otimes_k ({}_{a_2}N_{a_3}).
\]

Let $\dgcat_k^{\ord}$ be the ordinary category of small dg-categories whose morphisms are dg-functors.
In \cite{TabM} Tabuada constructed a model structure called \textbf{Morita model structure} on $\dgcat_k^{\ord}$ whose weak equivalences are Morita equivalences.
We let $\dgcat_k^{\mathrm{M}}$ denote the underlying $\infty$-category of this model category.

Consider the assignment 
\[
\eA \mapsto \RMod_{\eA}
\] 
for small dg-category $\eA$.
This assignment can be made $\infty$-functorial,
and it is shown in \cite[Corollary 5.7]{Coh13} that it induces an equivalence of $\infty$-categories
\begin{equation}\label{eq:dgsta}
\dgcat_k^{\mathrm{M}} \simeq \Catkcg
\end{equation}

Under this equivalence, an $\eA_1$-$\eA_2$-bimodule $X: \eA_1\to \RMod^{\dg}_{\eA_2}$ corresponds to a $k$-linear functor $\RMod_{\eA_1}\to \RMod_{\eA_2}$ via left Kan extension. 
The tensor product of bimodules corresponds to the composition of $k$-linear functors. 

\subsection{Monads and Barr--Beck theorem}

Let $\eC$ be a $k$-linear stable $\infty$-category.
The functor category $\LFun_k(\eC, \eC)$ carries a natural monoidal structure whose monoidal product is given by the composition. We let
\[
\Mnd_k(\eC) \coloneqq \Alg(\LFun_k(\eC, \eC))
\]
be the category of ($k$-linear) \textbf{monads} acting on $\eC$.
Note that the $\infty$-category $\eC$ is left-tensored over $\LFun_k(\eC, \eC)$.
Therefore for a monad $T \in \Mnd_k(\eC)$,
we can define the $\infty$-category $\LMod_T(\eC)$ of left $T$-modules in $\eC$.
It follows from \cite[Corollary 4.2.3.7(1)]{HA} that the $\infty$-category $\LMod_T(\eC)$ is presentable. One can see that it is also naturally a $k$-linear stable $\infty$-category.

A morphism of monads $T_1 \to T_2$ induces a forgetful functor
\[
\LMod_{T_2}(\eC) \to \LMod_{T_1}(\eC).
\]
It preserves limits and colimits by \cite[Corollary 4.2.3.7(2)]{HA}. One can also show that it has a natural $k$-linear structure.
Therefore, the adjoint functor theorem and Proposition \ref{prop:adjlinear} imply that we have a $k$-linear adjunction
\[\begin{tikzcd}
        \LMod_{T_2}(\eC) \arrow[r, shift right = .5ex]
        &  \LMod_{T_1}(\eC) \arrow[l, shift right = 1ex]
        \arrow[l, phantom, shift right = .2ex, "\scriptscriptstyle\boldsymbol{\bot}"]
  \end{tikzcd}.
\]

We collect some useful lemmas on monads and its modules:

\begin{lem}
Let $\eC$ be a compactly generated $k$-linear stable $\infty$-category and $T \in \Mnd_k(\eC)$ be a monad.
Then $\LMod_T(\eC)$ is compactly generated.
\end{lem}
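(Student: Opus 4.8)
The plan is to produce an explicit set of compact generators of $\LMod_T(\eC)$ by pushing forward the compact generators of $\eC$ along the free-module functor. Write $F \colon \eC \to \LMod_T(\eC)$ for the free $T$-module functor, left adjoint to the forgetful functor $U \colon \LMod_T(\eC) \to \eC$; being a left adjoint, $F$ preserves all small colimits. The first point I would record is that $U$ preserves all small colimits as well: the unit $\mathrm{id}_\eC \to T$ is a morphism of monads, $\LMod_{\mathrm{id}_\eC}(\eC) \simeq \eC$, and the induced forgetful functor is precisely $U$, so this follows from the preservation statement for forgetful functors recalled just before the lemma (ultimately it rests on $T$ being colimit-preserving, because $T \in \LFun_k(\eC, \eC)$). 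In particular $U$ preserves filtered colimits and geometric realizations, and it is conservative.

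Next I would fix a set $\{c_i\}_{i \in I}$ of compact objects generating $\eC$ under colimits, and claim that $\{F(c_i)\}_{i \in I}$ is a set of compact generators of $\LMod_T(\eC)$. Compactness is immediate from the adjunction: $\Map_{\LMod_T(\eC)}(F(c_i), -) \simeq \Map_{\eC}(c_i, U(-))$ preserves filtered colimits, since $c_i$ is compact and $U$ preserves filtered colimits.

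For the generation statement I would use the monadic bar resolution. For an arbitrary $M \in \LMod_T(\eC)$, the canonical augmented simplicial object $(FU)^{\bullet+1} M \to M$ acquires an extra degeneracy after applying $U$ (coming from the unit of the adjunction), so that $U$ carries it to a split augmented simplicial object, and in particular to a colimit diagram; since $U$ is conservative and preserves geometric realizations, the induced map $|(FU)^{\bullet+1} M| \to M$ from the geometric realization is an equivalence. Each term $(FU)^{n+1}M \simeq F\bigl((UF)^n UM\bigr)$ lies in the essential image of $F$, hence in the smallest colimit-closed full subcategory $\eD_0 \subseteq \LMod_T(\eC)$ containing $\{F(c_i)\}$: the object $(UF)^n UM \in \eC$ lies in the colimit-closure of $\{c_i\}$, and $F$ preserves colimits, so $F$ sends it into $\eD_0$. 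Being a geometric realization of objects of $\eD_0$, the module $M$ then also lies in $\eD_0$, so $\eD_0 = \LMod_T(\eC)$. Since $\LMod_T(\eC)$ is presentable (recalled before the lemma), this exhibits it as compactly generated.

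The argument is essentially formal, and I do not anticipate a real obstacle. The one ingredient that genuinely uses the hypotheses is the preservation of small colimits (in practice, of filtered colimits and of geometric realizations) by the forgetful functor $U$ — this is what makes both the compactness of the $F(c_i)$ and the bar-resolution step go through, and it in turn relies on the monad $T$ being a colimit-preserving endofunctor, i.e.\ on working inside $\LFun_k(\eC, \eC)$ rather than with an arbitrary endofunctor of $\eC$.
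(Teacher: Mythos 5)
Your proof is correct and follows essentially the same route as the paper: both take the images of a set of compact generators of $\eC$ under the free-module functor, deduce their compactness from the adjunction together with the fact that the forgetful functor preserves filtered colimits, and then check that they generate. The only difference is that for the generation step the paper simply cites \cite[Corollary 2.5]{Yan21} together with conservativity of the forgetful functor, whereas you prove it directly with the monadic bar resolution; your argument is a correct, self-contained substitute for that citation.
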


\begin{proof}
We let $F \colon \LMod_{T}(\eC) \to \eC$ be the forgetful functor and $F^L \colon \eC \to \LMod_{T}(\eC)$ be its left adjoint.
Take a set of compact generators $\{ x_i \}_{i \in I}$ in $\eC$.
Since $F$ preserves filtered colimits, its left-adjoint $F^L$ preserves compact objects.
Therefore $\{ F^{L}( x_i) \}_{i \in I}$ is a set of compact objects in $\LMod_T(\eC)$.
Thus the claim follows from \cite[Corollary 2.5]{Yan21} and the fact that $F$ is conservative.
\end{proof}

\begin{lem}\label{lem:monadpush}
Let $\eC$ be a $k$-linear compactly generated stable $\infty$-category and $T_1 \to T_2$ and $T_1 \to T_3$ be morphisms of monads acting on $\eC$.
Write $T_4 \coloneqq T_2 \coprod_{T_1} T_3$.
Then the following diagram is a pushout square in $\Catkcg$:
 \[
 \xymatrix{
 \LMod_{T_1}(\eC) \ar[r] \ar[d] & \LMod_{T_2}(\eC) \ar[d] \\
 \LMod_{T_3}(\eC) \ar[r] & \LMod_{T_4}(\eC).
 }
\]
\end{lem}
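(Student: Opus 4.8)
The plan is to verify the universal property of the pushout $\LMod_{T_4}(\eC)$ in $\Catkcg$ directly, by testing against an arbitrary object $\eD \in \Catkcg$ and identifying $\Map_{\Catkcg}(\LMod_{T_4}(\eC), \eD)$ with the appropriate fiber product of mapping spaces. The key input is the Barr--Beck--Lurie theorem, which lets one reinterpret $k$-linear functors out of a module category $\LMod_{T}(\eC)$ in terms of module structures. More precisely, the square in the statement is a square of $k$-linear functors, and using the equivalence $\dgcat_k^{\mathrm{M}} \simeq \Catkcg$ together with Proposition \ref{prop:adjlinear}, it suffices to work inside $\Catk$ (or even $\PrLst$) and then observe that all functors involved preserve compact objects because the forgetful functors preserve filtered colimits and the relevant left adjoints therefore preserve compactness.

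First I would pass to right adjoints. Each of the four functors in the square is the forgetful functor associated to a morphism of monads, hence preserves all limits and colimits by \cite[Corollary 4.2.3.7(2)]{HA}, and carries a $k$-linear structure; its right adjoint (the relative "induction" functor) is then also $k$-linear by Proposition \ref{prop:adjlinear}. A square in $\Catkcg$ is a pushout if and only if, after applying the equivalence $(\PrLc)^{\op}\simeq \PrRc$, the square of right adjoints is a pullback in the corresponding category; so the goal becomes showing that
\[
\LMod_{T_4}(\eC) \xrightarrow{\ \sim\ } \LMod_{T_2}(\eC) \times_{\LMod_{T_1}(\eC)} \LMod_{T_3}(\eC),
\]
the fiber product being taken along the forgetful functors, is an equivalence. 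Equivalently, an object of $\LMod_{T_4}(\eC)$ is the data of an object $x\in\eC$ together with compatible $T_2$- and $T_3$-module structures on $x$ which restrict to the same $T_1$-module structure. Since $T_4 = T_2\coprod_{T_1}T_3$ is the pushout of associative algebra objects in the monoidal category $\LFun_k(\eC,\eC)$, this is exactly the statement that left modules over a pushout of algebras are the fiber product of the module categories — a general fact about $\LMod$ over pushouts of $\mathbb{E}_1$-algebras in a presentably monoidal $\infty$-category, which follows from the description of $\LMod_{A}(\mathcal{M})$ as a limit (the two-sided bar construction) and the fact that $\LMod_{(-)}(\mathcal{M})$ sends colimits of algebras to limits of categories. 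I would cite the relevant statement in \cite{HA} (the behaviour of module categories under relative tensor products / pushouts of algebras) and then transport it through the left-tensoring of $\eC$ over $\LFun_k(\eC,\eC)$.

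The remaining point is to check the square stays inside $\Catkcg$, i.e. that $\LMod_{T_4}(\eC)$ is compactly generated and that the functors preserve compact objects; this is where the previous lemma is used — $\LMod_{T_i}(\eC)$ is compactly generated for each $i$ since $\eC$ is, and the forgetful functors preserve filtered colimits, so their left adjoints preserve compact objects, which upgrades the pushout/pullback statement from $\Catk$ to $\Catkcg$. I expect the main obstacle to be bookkeeping rather than conceptual: one must be careful that the fiber product of presentable stable $\infty$-categories along right adjoints computed in $\PrR$ agrees with the naive fiber product of underlying $\infty$-categories (true because the forgetful functors preserve all limits), and that the monoidal/linear structures are tracked correctly through the chain of equivalences $\Catkcg \simeq \Mod_{\Mod_k}(\PrLstc)$ and $(\PrLc)^{\op}\simeq\PrRc$. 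Once these identifications are in place, the statement is a formal consequence of the universal property of the algebra pushout $T_4$ and the Barr--Beck--Lurie description of module categories.
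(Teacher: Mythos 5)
Your opening reduction matches the paper's: forget down to $\PrLc$, dualize via $(\PrLc)^{\op}\simeq\PrRc$, and reduce to showing that the square of forgetful functors exhibits $\LMod_{T_4}(\eC)$ as a pullback. The problem is what comes next: the entire content of the lemma is the claim that $T\mapsto\LMod_T(\eC)$ carries the pushout of monads to a pushout of categories, and at exactly this point you assert it as ``a general fact about $\LMod$ over pushouts of $\mathbb{E}_1$-algebras'' that ``follows from \dots the fact that $\LMod_{(-)}(\mathcal{M})$ sends colimits of algebras to limits of categories.'' That sentence is circular --- the quoted ``fact'' \emph{is} the statement to be proved --- and the appeal to the two-sided bar construction does not establish it (the bar construction computes relative tensor products of modules, not a limit presentation of module \emph{categories} over a colimit of algebras). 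Moreover, the citable results in \cite{HA} of this shape (Corollary 4.8.5.13) concern $\RMod_A(\mathcal{C})$ for $A\in\Alg(\mathcal{C})$ acting on $\mathcal{C}$ itself; here $T$ is an algebra in $\LFun_k(\eC,\eC)$ acting on the \emph{different} category $\eC$, so you would additionally need the base-change equivalence $\LMod_A(\mathcal{M})\simeq\RMod_A(\mathcal{C})\otimes_{\mathcal{C}}\mathcal{M}$ (HA Theorem 4.8.4.6) to reduce to that case --- a nontrivial step your plan does not mention. A second, smaller gap: you identify the pullback in $\PrRc$ with the naive fiber product of underlying $\infty$-categories ``because the forgetful functors preserve all limits''; that is the right statement for $\PrR$, but for $\PrRc$ it is not immediate, and it is precisely the kind of point the paper's argument is structured to avoid.

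The paper fills this hole by a direct verification of the universal property: it reduces via \cite[Proposition 4.4.2.9]{HTT} to the overcategory $\PrRc_{/\eC}$, and for a test object $U\colon\eD\to\eC$ it identifies $\Map_{\PrRc_{/\eC}}(\eD,\LMod_T(\eC))\simeq\Map_{\Alg(\Fun(\eC,\eC))}(T,T_U)$ using the endomorphism monad $T_U$ of $U$ (here the conservativity and limit/filtered-colimit preservation of the forgetful functors let one pass from $\Cat_{/\eC}$ to $\PrRc_{/\eC}$). The pullback of mapping spaces is then immediate from $T_4$ being a pushout, modulo the observation that the forgetful functor $\Alg(\LFun_k(\eC,\eC))\to\Alg(\Fun(\eC,\eC))$ preserves pushouts. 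Either supply this endomorphism-monad argument, or make the HA 4.8.4.6 + 4.8.5.13 route precise; as written, the key step of your proof is asserted rather than proved.
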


\begin{proof}
As the forgetful functor $\Catkcg \to \PrLc$ is conservative and preserves small colimits,
it suffices to show that the above square is a pushout square in $\PrLc$, or equivalently, that the following is a pullback square in $\PrRc$:
\[
 \xymatrix{
 \LMod_{T_4}(\eC) \ar[r] \ar[d] & \LMod_{T_2}(\eC) \ar[d] \\
 \LMod_{T_3}(\eC) \ar[r] & \LMod_{T_1}(\eC).
 }
\]
Using \cite[Proposition 4.4.2.9]{HTT}, it suffices to prove that the above diagram is a pullback square in the category $\PrRc_{/\eC}$.

Let $U \colon \eD \to \eC$ be a morphism in $\PrRc$. 
We wish to show that the following diagram is a pullback square:
\begin{equation}\label{eq:square}
\xymatrix{
\Map_{\PrRc_{/\eC}}(\eD,  \LMod_{T_4}(\eC)) \ar[r] \ar[d] & \Map_{\PrRc_{/\eC}}(\eD,  \LMod_{T_2}(\eC)) \ar[d] \\
\Map_{\PrRc_{/\eC}}(\eD,  \LMod_{T_3}(\eC)) \ar[r] & \Map_{\PrRc_{/\eC}}(\eD,  \LMod_{T_1}(\eC))
}
\end{equation}

Now let $T \in \Alg(\Fun(\eC, \eC))$ be an arbitrary monad acting on $\eC$ and $T_U$ be the endomorphism monad of $U$ (see \cite[Definition 4.7.3.2, Proposition 4.7.3.3]{HA}).
By the definition of the endomorphism monad and \cite[Corollary 4.7.1.41]{HA},
we have an equivalence
\[
\Map_{\Alg(\Fun(\eC, \eC))}(T, T_U) \simeq \Map_{\Cat_{/ \eC}}(\eD, \LMod_{T}(\eC)).
\]
On the other hand, since the forgetful functor $\LMod_{T}(\eC)\to \eC$ is conservative and preserves small limits and small filtered colimits (so it reflects them), any functor $\eD\to \LMod_{T}(\eC)$ lifting $U$ preserves small limits and small filtered colimits, i.e., we have an equivalence
\[
\Map_{\Cat_{/ \eC}}(\eD, \LMod_{T}(\eC)) \simeq \Map_{\PrRc_{/ \eC}}(\eD, \LMod_{T}(\eC)).
\]
Thus the square \eqref{eq:square} is identified with the square
\[
\xymatrix{
\Map_{\Alg(\Fun(\eC, \eC))}(T_4, T_U) \ar[r] \ar[d] & \Map_{\Alg(\Fun(\eC, \eC))}(T_2, T_U) \ar[d] \\
\Map_{\Alg(\Fun(\eC, \eC))}(T_3, T_U) \ar[r] & \Map_{\Alg(\Fun(\eC, \eC))}(T_1, T_U),
}
\]
which is a pullback square, as the forgetful functor $\Mnd_{k}(\eC)\to \Alg(\Fun(\eC, \eC))$ preserves pushouts (notice that the monoidal functor $\LFun_k(\eC, \eC)\to \Fun(\eC, \eC)$ admits a right adjoint by \cite[Remark 5.5.2.10]{HTT}, which passes to the category of algebras). 
\end{proof}


\begin{lemdef}\label{lem:monadfree}
Let $\eC$ be a $k$-linear stable $\infty$-category and $F \in \LFun_k(\eC, \eC)$ be a $k$-linear functor.
Then there exists a \textbf{free monad} $TF \in \Mnd_k(\eC)$ with the following property:
\begin{itemize}
    \item For an arbitrary monad $M \in \Mnd_k(\eC)$, we have a natural equivalence
    \[
    \Map_{\Mnd_k(\eC)}(TF, M) \simeq \Map_{\LFun_k(\eC, \eC)}(F, T).
    \]
\item As a $k$-linear functor, we have an equivalence
\[
TF \simeq \Id_{\eC} \oplus F \oplus F^2 \oplus \cdots .
\]
\end{itemize}
\end{lemdef}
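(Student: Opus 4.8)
The plan is to realize $TF$ as the free associative algebra on $F$ in the monoidal $\infty$-category $\bigl(\LFun_k(\eC, \eC), \circ, \Id_{\eC}\bigr)$; since $\Mnd_k(\eC) = \Alg(\LFun_k(\eC, \eC))$ by definition, the two bullet points will then be, respectively, the defining adjunction of the free algebra and the ``tensor algebra'' formula for it.

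The one point requiring genuine verification is that $\LFun_k(\eC, \eC)$ with composition is \emph{presentably} monoidal: its underlying $\infty$-category is presentable (it is an internal hom in $\Catk$ and $\eC$ is presentable), and $-\circ-$ preserves small colimits separately in each variable. For the latter I would first record that colimits in $\LFun_k(\eC, \eC)$ are computed pointwise: the monoidal forgetful functor $\LFun_k(\eC, \eC)\to\Fun(\eC, \eC)$ admits a right adjoint (as used already in the proof of Lemma \ref{lem:monadpush}), hence preserves small colimits, and it is conservative on the presentable source, hence reflects them; in $\Fun(\eC, \eC)$ colimits are pointwise. Granting this, for fixed $G$ the functor $H\mapsto G\circ H$ preserves colimits because $G$ is a left adjoint and colimits are pointwise, while $H\mapsto H\circ G$ preserves colimits because $(\operatorname{colim}_i H_i)(G(x))\simeq\operatorname{colim}_i\bigl(H_i(G(x))\bigr)$ for every $x\in\eC$.

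With these properties, the free-algebra construction of \cite[\S 4.1.1]{HA} applies: the forgetful functor $\Mnd_k(\eC)=\Alg(\LFun_k(\eC,\eC))\to\LFun_k(\eC, \eC)$ admits a left adjoint, and the underlying functor of the free algebra on an object $G$ is the coproduct $\coprod_{n\ge 0}G^{\circ n}$ with $G^{\circ 0}=\Id_{\eC}$ the unit. Taking $TF$ to be the free algebra on $F$, the first bullet is the adjunction equivalence $\Map_{\Mnd_k(\eC)}(TF, M)\simeq\Map_{\LFun_k(\eC, \eC)}(F, M)$ (with $M$ on the right denoting the underlying functor of the monad), and the second is this formula, with $G^{\circ n}$ read as the $n$-fold composite $F^n$ and $\coprod=\oplus$ since $\LFun_k(\eC, \eC)$ is stable. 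I expect the pointwise-colimit bookkeeping of the middle paragraph to be the main obstacle; once it is in place the rest is a direct appeal to \cite{HA}, and the free algebra automatically consists of $k$-linear functors because the whole construction takes place inside $\LFun_k(\eC, \eC)$.
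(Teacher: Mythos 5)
Your proposal is correct and follows essentially the same route as the paper, which simply records that $\LFun_k(\eC,\eC)$ has pointwise small colimits preserved separately in each variable by composition and then invokes \cite[Proposition 4.1.1.18]{HA}. The extra bookkeeping you supply (the right adjoint and conservativity of the forgetful functor to $\Fun(\eC,\eC)$ to justify pointwise colimits) is a sound elaboration of what the paper leaves implicit.
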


\begin{proof}
Note that the $\infty$-category $\LFun_k(\eC, \eC)$ has small colimits (which is computed pointwise) and its monoidal product preserves small colimits separately in each variable.
Therefore the claim follows from \cite[Proposition 4.1.1.18]{HA}.
\end{proof}

Now we recall Lurie's $\infty$-categorical version of the Barr--Beck theorem \cite[Theorem 4.7.3.5]{HA}.
Let $U \colon \eD \to \eC$ be a $k$-linear functor between $k$-linear stable $\infty$-categories which admits a left adjoint functor $U^L$. Then the endomorphism monad $T$ of $U$ (\cite[Definition 4.7.3.2]{HA}) exists, and its underlying functor is $U \circ U^{L}$.
One can show that $U$ naturally lifts to a $k$-linear functor $\bar{U} \colon \eD \to \LMod_T(\eC)$ such that the following diagram commutes:
\[
\xymatrix{
& \LMod_{T}(\eC) \ar[d] \\
\eD \ar[r]_-{U} \ar@{-->}[ru]^{\bar{U}} & \eC
}
\]
We say that $U$ is \textbf{monadic} if $\bar{U}$ is an equivalence of $\infty$-categories.

\begin{thm}[{Barr--Beck--Lurie, \cite[Theorem 4.7.3.5]{HA}}]
Let $U \colon \eD \to \eC$ be a $k$-linear functor which admits a left adjoint functor.
Then $U$ is monadic if and only if it is conservative.
\end{thm}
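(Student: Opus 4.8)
The plan is to deduce this from Lurie's general Barr--Beck--Lurie theorem \cite[Theorem 4.7.3.5]{HA}, which states that a functor $G \colon \eD \to \eC$ of $\infty$-categories admitting a left adjoint is monadic if and only if $G$ is conservative and $\eD$ admits, and $G$ preserves, the colimits of $G$-split simplicial objects. The essential observation is that in the present situation this last condition is vacuous: since $\eD$ is presentable it admits all small colimits, and since $U$ is a $k$-linear functor it is in particular a morphism of $\PrLst$, hence preserves all small colimits, a fortiori the colimits of $U$-split simplicial objects. So, modulo comparing the $k$-linear notion of monadicity introduced above with the plain $\infty$-categorical one, the statement is immediate.

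To carry out that comparison, I would note that forgetting the $k$-linear structure does not change the relevant $\infty$-categories: the underlying $\infty$-category of the $k$-linear module category $\LMod_T(\eC)$ is the ordinary module $\infty$-category $\LMod_{T_0}(\eC)$ associated to the underlying endofunctor monad $T_0 = U \circ U^{L}$ of $T$ (modules over a monad depend only on its action as an endofunctor of $\eC$, and the left-tensoring of $\eC$ over $\LFun_k(\eC,\eC)$ is the restriction of that over $\Fun(\eC,\eC)$ along the monoidal functor $\LFun_k(\eC,\eC) \to \Fun(\eC,\eC)$). Moreover, under this identification the $k$-linear lift $\bar{U} \colon \eD \to \LMod_T(\eC)$ has underlying $\infty$-functor the canonical lift of $U$ relative to its ordinary endomorphism monad, because the $k$-linear endomorphism monad of $U$ has underlying monad precisely the ordinary endomorphism monad $T_0$ of $U$ (both satisfy the same universal property once linearity is forgotten, using that $U^L$ exists); this is exactly the kind of compatibility recorded in \cite[\S 4.7]{HA} and \cite[Chapter 1]{GR17}. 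Since a $k$-linear functor is an equivalence if and only if its underlying $\infty$-functor is, $\bar{U}$ is a $k$-linear equivalence precisely when it is an equivalence of $\infty$-categories.

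With these reductions the proof is short. If $U$ is monadic, then the commuting triangle preceding the statement exhibits $U$ as the composite of the equivalence $\bar U$ with the forgetful functor $\LMod_T(\eC) \to \eC$; the latter is conservative (a map of $T$-modules that is an equivalence in $\eC$ is automatically invertible as a map of $T$-modules), so $U$ is conservative. Conversely, if $U$ is conservative, then $U$ admits a left adjoint by hypothesis, $\eD$ is presentable, and $U$ preserves all small colimits, so \cite[Theorem 4.7.3.5]{HA} applies and shows that the plain lift of $U$ is an equivalence of $\infty$-categories; by the previous paragraph this lift is $\bar U$, so $\bar U$ is a $k$-linear equivalence, i.e., $U$ is monadic. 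The only genuine work, and the step I expect to be the main obstacle, is the bookkeeping in the second paragraph: one must match the $k$-linear constructions of the endomorphism monad, of $\LMod_T(\eC)$, and of the canonical lift $\bar U$ with their non-linear counterparts so that the hypotheses and conclusion of the cited theorem transport correctly; everything else follows formally from presentability and $k$-linearity.
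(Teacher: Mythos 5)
Your argument is correct, but note that the paper supplies no proof of this statement at all: it is recorded purely as a citation of \cite[Theorem 4.7.3.5]{HA}. Your derivation makes explicit exactly the two reductions the paper leaves implicit --- that the split-simplicial-colimit hypothesis of Lurie's theorem is automatic because $\eD$ is presentable and a $k$-linear functor is by definition a morphism of $\PrLst$ (hence colimit-preserving), and that the $k$-linear endomorphism monad, module category, and canonical lift $\bar U$ have as underlying data their non-linear counterparts --- so it is a faithful filling-in of the same approach rather than a different one.
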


\subsection{Hochschild homology and negative cyclic homology}

Let $\eC$ be a compactly generated (hence dualizable) $k$-linear stable $\infty$-category.
Consider the following composition
\[
\Mod_k \xrightarrow[]{\coev_{\eC}} \eC \otimes \eC^{\vee} \simeq \eC^{\vee} \otimes \eC \xrightarrow[]{\ev_{\eC}} \Mod_k
\]
where $\coev_{\eC}$ denotes the coevaluation functor and $\ev_{\eC}$ denotes the evaluation functor.
We define the \textbf{Hochschild homology complex} of $\eC$, denoted by $\HH(\eC)$, to be the image of the one-dimensional vector space under the above composition.
It follows from \cite[Theorem 2.14]{HSS17} that the Hochschild homology complex $\HH(\eC)$ carries a natural $S^1$-action hence a mixed differential $\delta \colon \HH(\eC) \to \HH(\eC)[-1]$.
We define the \textbf{negative cyclic complex} $\HC^-(\eC)$ to be the homotopy $S^1$-fixed point of $\HH(\eC)$. 
It is shown in \cite[Theorem 2.14]{HSS17} that if we are given a $k$-linear functor between $k$-linear stable $\infty$-categories $F \colon \eC \to \eD$ which preserves the compact objects, we have a natural map $\HH(F) \colon \HH(\eC) \to \HH(\eD)$ of $S^1$-equivariant complexes.

\begin{ex}
Let $X$ be a smooth variety over $k$.
Then the HKR decomposition implies an isomorphism
\[
\HH_n(\Qcoh(X)) \simeq \bigoplus_{n = q - p} H^p(X, \wedge^q \Omega_X).
\]
If we are given a morphism between smooth varieties $f \colon X \to Y$, the map $\HH(f^*)$ is given by the pullback map of differential forms.
\end{ex}

For later use, we recall some basic facts about negative cyclic homology.
Firstly, by a generality of mixed complexes (see \cite[\S 2.5.13]{Lod98}) we obtain the Connes exact sequence
\[
\cdots \to \HC^{-}_{n+2}(\eC) \to \HC^{-}_{n}(\eC) \xrightarrow[]{\pi} \HH_n(\eC) \xrightarrow[]{\tilde{\delta}} \HC^{-}_{n+1}(\eC) \to \cdots
\]
where $\pi$ is the map forgetting the $S^1$-fixed point structure and $\tilde{\delta}$ is a natural lift of the mixed differential.
Now take $\eC = \Qcoh(X)$ for some $d$-dimensional smooth variety $X$. It is shown in \cite[Lemma 5.10]{BDI} that for $i > d$ the negative cyclic homology group $\HC^{-}_{i}(\Qcoh(X))$ vanishes.
Therefore we have natural isomorphisms
\begin{equation}\label{eq:HQcoh}
\HC^{-}_{d}(\Qcoh(X)) \cong \HH^{}_{d}(\Qcoh(X)) \cong H^0(X, \omega_X)
\end{equation}
and a natural short exact sequence
\begin{equation}\label{eq:HCQcoh2}
0 \to \HC^{-}_{d-1}(\Qcoh(X)) \to \HH_{d-1}(\Qcoh(X)) \xrightarrow[]{\delta} \HH_{d}(\Qcoh(X)).
\end{equation}

We now discuss Hochschild homology for a special class of $k$-linear stable $\infty$-categories called smooth. A $k$-linear stable $\infty$-category $\eC$ is called \textbf{smooth} if the evaluation map
\[
\ev_{\eC} \colon \eC^{\vee} \otimes \eC \to \Mod_{k}
\]
has a left adjoint $\ev_{\eC}^{L}$. In this case, we define the \textbf{inverse dualizing functor} $\Id_{\eC}^! \colon \eC \to \eC$ to be the image of the one-dimensional vector space under the functor
\[
\Mod_{k} \xrightarrow[]{\ev_{\eC}^{L}} {\eC}^{\vee} \otimes \eC \simeq \LFun_{k}(\eC, \eC).
\]
\begin{ex}\label{ex:invQcoh}
Let $X$ be a smooth variety over $k$.
Then $\Qcoh(X)$ is compactly generated $k$-linear stable $\infty$-category.
Further, it is shown in \cite[Proposition 4.7, Corollary 4.8]{BFN10} that we have equivalences
\[
\Qcoh(X \times X) \simeq \Qcoh(X)^{\vee} \otimes \Qcoh(X)  \simeq \LFun_{k}(\Qcoh(X), \Qcoh(X)) 
\]
which maps $\cP \in \Qcoh(X \times X)$ to the Fourier--Mukai transform with the kernel $\cP$.
Under this identification, the evaluation map is given by the composition
\[
\ev_{\Qcoh(X)} \colon \Qcoh(X \times X) \xrightarrow[]{\Delta^*} \Qcoh(X) \xrightarrow[]{\Gamma} \Mod_k
\]
where $\Delta \colon X \to X \times X$ is the diagonal map.
Using the Grothendieck duality, we see that the left adjoint to the evaluation functor is given by
\[
V \mapsto \Delta_*(a_X^* V \otimes \omega_X^{-1}[ - \dim X])
\]
where $a_X$ is the constant map from $X$ to a point. Therefore we have
\[
\Id^!_{\Qcoh(X)} = - \otimes \omega_X^{-1}[-\dim X].
\]
\end{ex}

By the definition of the inverse dualizing functor, we have a natural equivalence
\[
\HH(\eC) \simeq \Map_{\LFun_k}(\Id_{\eC}^!, \Id_{\eC}).
\]
Let $F \colon \eC \to \eD$ be a $k$-linear functor between smooth $k$-linear stable $\infty$-categories with a $k$-linear right adjoint $F^{R} \colon \eD \to \eC$. We can define a natural transformation
\begin{equation}\label{eq:desire}
    \eta \colon \Id_{\eD}^! \to F \circ \Id_{\eC}^! \circ F^R
\end{equation}
as follows: Firstly note that we have a natural transformation
\[
\alpha \colon \ev_{\eC} \to \ev_{\eD} \circ ((F^R)^{\vee} \otimes F)
\]
where $\ev_{\eC} \colon \eC^{\vee} \otimes \eC \to \Mod_k$ and $\ev_{\eD} \colon \eD^{\vee} \otimes \eD \to \Mod_k$ are evaluation functors.
Then consider the following composition
\begin{align*}
    \ev_{\eD}^L \to \ev_{\eD}^L  \circ \ev_{\eC} \circ \ev_{\eC}^L \xrightarrow{\ev_{\eD}^L \alpha \ev_{\eC}^L} \ev_{\eD}^L \circ \ev_{\eD} \circ ((F^R)^{\vee} \otimes F) \circ \ev_{\eC}^L \to ((F^R)^{\vee} \otimes F) \circ \ev_{\eC}^L.
\end{align*}
By evaluating this natural transformation at $k$, we obtain the desired map \eqref{eq:desire}.
It is shown in \cite[Proposition 4.4]{BDII} that the map $\HH(F)$ is given by the following composition
\[
\Hom(\Id_{\eC}^!, \Id_{\eC}) \to \Hom(F \circ \Id_{\eC}^! \circ F^R, F \circ F^{R}) \to \Hom(\Id_{\eD}^!, \Id_{\eD})
\]
where the latter map is defined using $\eta$ and the counit map.

\section{Deformed Calabi--Yau completion of \texorpdfstring{$k$}{k}-linear stable \texorpdfstring{$\infty$}{infty}-categories}\label{sec:defCY}

Let $\eA$ be a smooth small dg-category, $n$ be an integer and $c \in \HC^{-}_{n-1}(\eA)$ be a negative cyclic homology class.
In \cite{Kel11}, Keller introduced a new dg-category $\Pi_{n + 1}(\eA, c)$ called the \textbf{deformed $(n + 1)$-Calabi--Yau completion} of $\eA$.

In this section, we develop the theory of deformed Calabi--Yau completion using the language of $k$-linear stable $\infty$-category rather than small dg-category.
Our description of the Calabi--Yau completion is given by the $\infty$-category of left modules over a certain free monad. 
We will compare our description and Keller's original definition using the Barr--Beck--Lurie theorem.

\subsection{Tensor algebra category and deformed Calabi--Yau completion}\label{ssec:CYcomp}

Let $\eC$ be a $k$-linear stable $\infty$-category and $F \colon \eC \to \eC$ be a $k$-linear endofunctor.
As is shown in Lemma-Definition \ref{lem:monadfree}, we can construct the free monad $TF \in \Alg(\LFun_{k}(\eC, \eC))$ generated by $F$.
We define the \textbf{tensor algebra $\infty$-category} $T_{\eC}(F) \in \Catk$ by 
\[
T_{\eC}(F) \coloneqq \LMod_{TF}(\eC).
\]

Let us see that our definition of tensor algebra $\infty$-category is compatible with Keller's definition of tensor dg-category \cite[\S 4.1]{Kel11}. 
Let $\eA$ be a small dg-category and $X$ be a $\eA$-$\eA$-bimodule.
Consider the following $\eA$-$\eA$-bimodule
\[
T_{\eA}^{\dg}(X) \coloneqq \eA \oplus X \oplus (X \otimes_{\eA} X) \oplus \cdots.
\]
For objects $a_i \in \eA$ (i = 1, 2, 3),
we have a natural morphism of dg-vector spaces
\[
({}_{a_3}T_{\eA}^{\dg}(X)_{a_2}) \otimes_k ({}_{a_2}T_{\eA}^{\dg}(X)_{a_1}) \to {}_{a_3}T_{\eA}^{\dg}(X)_{a_1}.
\]
Therefore $T_{\eA}(X)$ carries a dg-category structure, whose set of objects are $\Obj(\eA)$ and the dg-vector space of morphisms is defined by 
\[
T_{\eA}(X)^{\dg}(a_1, a_2) = {}_{a_2}T_{\eA}^{\dg}(X)_{a_1}.
\]
The dg-category $T_{\eA}(X)$ is called \textbf{tensor dg-category}.

\begin{prop}\label{prop:vssmall}
Let $\eA$ be a small dg-category and $X$ be a cofibrant $\eA$-$\eA$-bimodule.
We let $F_X \colon \RMod_{\eA} \to \RMod_{\eA}$ be the $\infty$-functor given by tensoring $X$. Then we have an equivalence
\[
\RMod_{T_{\eA}^{\dg}(X)} \simeq T_{\RMod_{\eA}}(F_X).
\]
\end{prop}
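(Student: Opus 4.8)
The plan is to identify both sides as $\LMod$-categories over a common free monad on $\RMod_{\eA}$ and then invoke the equivalence \eqref{eq:dgsta} together with the description of bimodule tensor products as composition of $k$-linear functors. First I would observe that under the dictionary of \eqref{eq:dgsta}, the $k$-linear stable $\infty$-category $\RMod_{\eA}$ corresponds to the small dg-category $\eA$, and the endofunctor $F_X \colon \RMod_{\eA} \to \RMod_{\eA}$ given by tensoring with the (cofibrant, hence homotopically well-behaved) bimodule $X$ corresponds, again under \eqref{eq:dgsta}, to the bimodule $X$ itself: this is exactly the statement recalled after \eqref{eq:dgsta} that an $\eA$-$\eA$-bimodule corresponds via left Kan extension to a $k$-linear endofunctor of $\RMod_{\eA}$, and that tensor product of bimodules matches composition of functors. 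Consequently $F_X^{\circ m}$ corresponds to $X^{\otimes_{\eA} m}$ for every $m \geq 0$, compatibly with the monoidal structures.

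Next I would compute the free monad $TF_X$ using the explicit formula of Lemma-Definition \ref{lem:monadfree}: as a $k$-linear functor we have $TF_X \simeq \Id_{\RMod_{\eA}} \oplus F_X \oplus F_X^{\circ 2} \oplus \cdots$, with monad structure given by concatenation. Transporting through \eqref{eq:dgsta}, this corresponds to the bimodule $\eA \oplus X \oplus (X \otimes_{\eA} X) \oplus \cdots = T_{\eA}^{\dg}(X)$, and the multiplication of the free monad corresponds precisely to the concatenation multiplication on $T_{\eA}^{\dg}(X)$ that defines the tensor dg-category structure. In other words, under \eqref{eq:dgsta} the algebra object $TF_X \in \Alg(\LFun_k(\RMod_{\eA}, \RMod_{\eA}))$ corresponds to the associative algebra object in $\eA$-$\eA$-bimodules given by $T_{\eA}^{\dg}(X)$, i.e.\ to the dg-category $T_{\eA}^{\dg}(X)$ viewed as an algebra over $\eA$ in the Morita sense. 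The key point here is the universal property: $TF_X$ is characterized by $\Map(TF_X, M) \simeq \Map(F_X, M)$ for monads $M$, and $T_{\eA}^{\dg}(X)$ enjoys the analogous universal property among dg-categories under $\eA$ equipped with a bimodule map from $X$; since \eqref{eq:dgsta} is an equivalence of $\infty$-categories it matches universal properties, so the two coincide.

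Finally, I would compare $\RMod$ of the tensor dg-category with $\LMod$ over the free monad. By the compatibility of \eqref{eq:dgsta} with bimodules-as-functors, the dg-category $T_{\eA}^{\dg}(X)$, regarded as $\eA$ together with the algebra structure on $T_{\eA}^{\dg}(X)$, has $\RMod_{T_{\eA}^{\dg}(X)} \simeq \LMod_{TF_X}(\RMod_{\eA})$: a right $T_{\eA}^{\dg}(X)$-module is the same as a right $\eA$-module together with a compatible action of the bimodule $T_{\eA}^{\dg}(X)$, which is exactly a left module in $\RMod_{\eA}$ over the monad $TF_X$. Unwinding the definitions, the right-hand side $T_{\RMod_{\eA}}(F_X) = \LMod_{TF_X}(\RMod_{\eA})$ by definition, so we are done. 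The main obstacle I anticipate is not conceptual but bookkeeping: one must be careful that the cofibrancy hypothesis on $X$ ensures the underived bimodule tensor powers $X^{\otimes_{\eA} m}$ compute the derived ones, so that the naive direct-sum bimodule $T_{\eA}^{\dg}(X)$ really represents $TF_X$ on the nose rather than only after a fibrant-cofibrant replacement, and that the associativity/unit coherences of the free-monad structure on the $\infty$-categorical side are matched with the strict dg-algebra structure on $T_{\eA}^{\dg}(X)$ under the equivalence \eqref{eq:dgsta}; handling this cleanly is where the real work lies, and it is most naturally done by appealing to the universal property of the free monad rather than by a direct comparison of the two explicit direct-sum presentations.
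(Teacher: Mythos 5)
Your overall strategy coincides with the paper's --- both sides are to be recognized as left modules over the free monad $TF_X$ on $\RMod_{\eA}$, and the right-hand side is such by definition --- but the step you dismiss as ``unwinding the definitions'' is precisely where the content of the proposition lies, and as written it is a gap. The assertion that a right $T_{\eA}^{\dg}(X)$-module ``is the same as'' a right $\eA$-module with a compatible action of the bimodule $T_{\eA}^{\dg}(X)$, hence a left $TF_X$-module, is true at the level of strict dg-modules, but the statement concerns the localized $\infty$-categories: $\RMod_{T_{\eA}^{\dg}(X)}$ is defined as the underlying $\infty$-category of a model category of strict dg-modules, whereas $\LMod_{TF_X}(\RMod_{\eA})$ is defined purely $\infty$-categorically as left modules over an algebra object of $\LFun_k(\RMod_{\eA}, \RMod_{\eA})$. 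Bridging these two constructions is not formal. The paper does it by applying the Barr--Beck--Lurie theorem to the forgetful functor $U \colon \RMod_{T_{\eA}^{\dg}(X)} \to \RMod_{\eA}$: one checks that $U$ admits the left adjoint $U^{L} = - \otimes_{\eA} T_{\eA}^{\dg}(X)$ and is conservative and colimit-preserving, so that $U$ is monadic and $\RMod_{T_{\eA}^{\dg}(X)} \simeq \LMod_{M}(\RMod_{\eA})$ for the endomorphism monad $M = U \circ U^{L}$. Your proposal never invokes monadicity, so it never actually produces the equivalence it needs; note also that the general dictionary after \eqref{eq:dgsta} matches bimodules with functors and tensor products with compositions, but does not by itself convert an algebra object in bimodules into a statement about its $\infty$-category of modules.

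Once monadicity is in place, the remaining task is to identify $M = U \circ U^{L}$ with $TF_X$, and here your discussion is essentially the paper's, only more laborious: rather than matching the two explicit direct-sum presentations term by term through \eqref{eq:dgsta} (which, as you yourself note, raises coherence issues about transporting the strict dg-algebra structure on $T_{\eA}^{\dg}(X)$ to the $\infty$-categorical free-monad structure), the paper simply cites \cite[Proposition 4.1.1.18]{HA}, which both constructs the free monad by the formula of Lemma-Definition \ref{lem:monadfree} and characterizes it by its universal property; since the underlying functor of $M$ is $\bigoplus_m F_X^{\circ m}$ with multiplication given by concatenation, the identification is immediate. The cofibrancy of $X$ enters, as you suspected, only to ensure that $- \otimes_{\eA} X$ and the underived tensor powers $X^{\otimes_{\eA} m}$ compute the derived ones.
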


\begin{proof}
Consider the forgetful functor $U \colon \RMod_{T_{\eA}^{\dg}(X)} \to \RMod_{\eA}$.
It admits a left adjoint $U^{L} = - \otimes_{\eA} T_{\eA}^{\dg}(X)$. 
Since $U$ is conservative and colimit preserving, the Barr--Beck--Lurie theorem implies that $U$ is monadic.
Therefore if we write $M \coloneqq U \circ U^L$ for the endomorphism monad, we need to show an equivalence of monads $M \simeq T F_X$.
This follows immediately from \cite[Proposition 4.1.1.18]{HA}.
\end{proof}

Now we introduce the notion of Calabi--Yau completion as a special case of tensor algebra $k$-linear $\infty$-category.

\begin{defin}[\cite{Kel11}]
Let $\eC$ be a smooth $k$-linear stable $\infty$-category.
For an integer $n$, we define the \textbf{$n$-Calabi--Yau completion} $\Pi_n(\eC)$ of $\eC$ by
\[
\Pi_n(\eC) \coloneqq T_{\eC}(\Id^!_{\eC}[n-1]).
\]
\end{defin}

It follows from Proposition \ref{prop:vssmall} that our definition of Calabi--Yau completion is compatible with Keller's definition in \cite[\S 4.1]{Kel11}.

\subsection{Deformed Calabi--Yau completion}\label{ssec:defCYcompt}

Let $\eC$ be a smooth $k$-linear stable $\infty$-category and take a Hochschild homology class $c \colon k[n-1] \to \HH(\eC) \simeq \Map(\Id_{\eC}^! , \Id_{\eC})$.
We let $i_c^{\#}$ be the morphism of monads $ T\Id_{\eC}^![n-1] \to \Id_{\eC}$ induced by $c$.
It defines a $k$-linear functor $\Pi_n(\eC) \to \eC$ which we denote by $i_c^*$.
We define the \textbf{deformed $(n+1)$-Calabi--Yau completion} $\Pi_{n+1}(\eC, c)$ of $\eC$ associated with $c$ by the following pushout square in $\Catkcg$:
\begin{equation}\label{eq:defCY}
    \xymatrix{
    \Pi_n(\eC) \ar[r]^-{i_0^*} \ar[d]^-{i_c^*} & \eC \ar[d] \\
    \eC \ar[r] & \Pi_{n+1}(\eC, c) \pocorner.
    }
\end{equation}
It follows from \cite[Proposition 5.5]{Kel11} that our definition of the deformed Calabi--Yau completion is compatible with Keller's definition in \cite[\S 5.1]{Kel11}.

\begin{rmk}
For a Hochschild homology class $c \colon k[n-1] \to \HH(\eC) $ we define the monad $T_c \Id_{\eC}^![n] \in \Alg(\LFun_{k}(\eC, \eC))$ by the following pushout square:
\[
    \xymatrix{
    T\Id_{\eC}^![n - 1] \ar[r]^-{i_0^{\#}} \ar[d]^-{i_c^{\#}} & \Id_{\eC} \ar[d] \\
    \Id_{\eC} \ar[r] & T_c\Id_{\eC}^![n] \pocorner.
    }
\]
Then it follows from Lemma \ref{lem:monadpush}
that we have an equivalence
\[
\Pi_{n+1}(\eC, c) \simeq \LMod_{T_c\Id_{\eC}^![n]}(\eC).
\]
When $c=0$, we have a natural equivalence $T_0\Id_{\eC}^![n] \simeq T\Id_{\eC}^![n]$. This is a consequence of the fact that the free monad functor
\[
\mathrm{Free} \colon \LFun_k(\eC, \eC) \to \Alg(\LFun_k(\eC, \eC))
\]
preserves colimits. Therefore we obtain an equivalence
\begin{equation}\label{eq:defundef}
\Pi_{n+1}(\eC, 0) \simeq \Pi_{n+1}(\eC). 
\end{equation}
\end{rmk}

\subsection{Relative Calabi--Yau structure}\label{ssec:CYstr}

We recall the notion of left Calabi--Yau structure and its relative version introduced in \cite{BDI}.

\begin{defin}
Let $\eC$ be a smooth $k$-linear stable $\infty$-category. A negative cyclic class $c \colon k[n] \to \HC^-(\eC)$ is called a \textbf{left Calabi--Yau structure} if its underlying Hochschild homology class $\HH_n(\eC) \simeq \Hom(\Id_{\eC}[n], \Id_{\eC}^!)$ induces an equivalence of functors
\[
\Id_{\eC}[n] \simeq \Id_{\eC}^!.
\]
\end{defin}

\begin{ex}\label{ex:CY}
Let $X$ be a smooth variety of dimension $n$.
Then it is shown in \cite[Lemma 5.12]{BDI} that an $n$-form
\[
c \in H^0(X, \omega_X) \cong \HH_n(\Qcoh(X)) \cong \HC^{-}_n(\Qcoh(X))
\]
corresponds to a left Calabi--Yau structures if it defines a trivialization of the line bundle $\omega_X$.
\end{ex}

Now we discuss the relative version.
\begin{defin}
Let $F \colon \eC \to \eD$ be a $k$-linear functor between smooth $k$-linear stable $\infty$-categories with a $k$-linear right adjoint $F^{r}$.
A \textbf{relative left $n$-Calabi--Yau structure} is a pair of a left $n$-Calabi--Yau structure $c \colon k[n] \to \HC^{-}(\eC)$ and a homotopy $\tau \colon \HC^{-}(F)(c) \simeq 0$ such that the map
\[
\mathrm{cofib}(\Id_{\eD}^![n] \to F \circ \Id_{\eC}^![n] \circ F^r) \to \Id_{\eD}
\]
induced by the map $F \circ \Id_{\eC}^![n] \circ F^r \xrightarrow{FcF^r}  F \circ F^r \to \Id_{\eD}$ and the null-homotopy $\tau$ is an equivalence.

\end{defin}

Let $F_i \colon \eC \to \eD_i$ be a $k$-linear functor between smooth $k$-linear stable $\infty$-categories with $k$-linear right adjoints for $i =1, 2$.
Write $\eE$ for the pushout of $F_1$ and $F_2$ i.e. $\eE$ fits into the following pushout square
\[
\xymatrix{
\eC \ar[r]^-{F_1} \ar[d]_-{F_2} & \eD_1 \ar[d] \\
\eD_{2} \ar[r] & \eE \pocorner.
}
\]
Assume that we are given relative $n$-Calabi--Yau structures for $F_1$ and $F_2$.
Then we obtain a loop at $0$ in the space $|\HC^{-}(\eE)[-n]|$ hence a map $\tilde{c} \colon k[n + 1] \to \HC^{-}(\eE)$. 
It is shown in \cite[Theorem 6.2]{BDI} that $\tilde{c}$ defines an $n+1$-Calabi--Yau structure on $\eE$.

\subsection{Calabi--Yau structure on the deformed Calabi--Yau completion}

It is shown in \cite{Kel18} that deformed Calabi--Yau completion of a small dg-category carries a left Calabi--Yau structure.
In this section, we will give a construction of the left Calabi--Yau structure using the language of $k$-linear stable $\infty$-categories
rather than small dg-categories as in the original paper \cite{Kel18}.

Let $\eC$ be a $k$-linear stable $\infty$-category and let $X \colon \eC \to \eC$ be a $k$-linear functor.
We let $\pi_* \colon T_{\eC}(X) \to \eC$ denote the forgetful functor and $\pi^* \colon  \eC \to T_{\eC}(X)$ denote its left adjoint functor.
We define the tautological map
\[
u \in \Map_{\LFun_{k}(T_{\eC}(X), T_{\eC}(X))}(\pi^* \circ X \circ \pi_*, \Id_{T_{\eC}(X)})
\]
by the composition
\[
\pi^* \circ X \circ \pi_* \xrightarrow[]{\pi^*(X \hookrightarrow \pi_* \pi^*)\pi_*} \pi^* \circ \pi_* \circ \pi^* \circ \pi_* \to \Id_{T_{\eC}(X)}
\]
where the latter map is defined by the counit map.
Now consider the case where $\eC$ is smooth and $X = \Id_{\eC}^![n-1]$.
We have seen in \eqref{eq:desire} that we have a natural transform 
\begin{equation}\label{eq:pullHoch}
\Id_{\Pi_n(\eC)}^! \to \pi^* \circ \Id_{\eC}^! \circ \pi_*.
\end{equation}
Now we define the \textbf{tautological Hochschild homology class} $\theta_{\Pi_{n}(\eC)} \colon k[n-1] \to \HH(\Pi_n(\eC))$ by the map corresponding to the following composition
\[
\Id_{\Pi_n(\eC)}^![n-1] \xrightarrow[]{\eqref{eq:pullHoch}} \pi^* \circ \Id_{\eC}^! [n-1] \circ \pi_* \xrightarrow[]{u} \Id_{\Pi_n(\eC)}.
\]
It is clear that this Hochschild homology class is naturally equivalent to the one constructed in \cite[Theorem 1.1]{Kel18}.
Therefore it follows that $\tilde{\delta} \theta_{\Pi_{n}(\eC)} \colon k[n] \to \HC^{-}(\eC)$ defines a left $n$-Calabi--Yau structure on $\Pi_n(\eC)$ which will be denoted by $\eta_{\Pi_n(\eC)}$.

\begin{ex}\label{ex:localmain}
Let $Y = \Spec A$ be a smooth affine scheme of dimension $n$ over $k$ and take $\eC = \Qcoh(Y)$.
Write $X \coloneqq \Tot_{Y}(\omega_Y)$.
Then Proposition \ref{prop:vssmall} and Example \ref{ex:invQcoh} implies an equivalence
\[
\Pi_{n + 1}(\Qcoh(Y)) \simeq \Qcoh(X).
\]
Under this equivalence and the HKR decomposition, the tautological Hochschild homology class $k[n] \to \HH(\Pi_{n+1}(\Qcoh(Y)))$
corresponds to the tautological $n$-form on $X$.
Therefore the left $n+1$-Calabi--Yau structure on $\Pi_{n+1}(\Qcoh(Y))$ corresponds to the natural Calabi--Yau form on $X$.

\end{ex}

Now we discuss the deformed case.
Let $\eC$ be a smooth $k$-linear stable $\infty$-category and take a Hochschild homology class $c \colon k[n-1] \to \HH(\eC)$ with a negative cyclic lift $\tilde{c}$.
Recall that the class $c$ induces a $k$-linear functor $i_c^* \colon \Pi_n(\eC) \to \eC$.
Clearly, we have a natural homotopy
\[
\HH(i_c^*)(\theta_{\Pi_n(\eC)}) \sim c.
\]
Therefore we obtain a natural homotopy
\[
\HC^{-}(i_c^*)(\eta_{\Pi_n(\eC)}) \sim \tilde{\delta} c \sim 0.
\]
where the latter homotopy is defined by the negative cyclic lift $\tilde{c}$.
It is shown in \cite[Proposition 5.17]{BCS20} that the above homotopy defines a relative Calabi--Yau structure on $i_c^*$.
Therefore using the result of \S \ref{ssec:CYstr}, we see that the deformed Calabi--Yau completion $\Pi_n(\eC, \tilde{c}) \coloneqq \Pi_n(\eC, c)$ carries a natural left Calabi--Yau structure denoted by $\eta_{\Pi_n(\eC, \tilde{c})}$ (depending on $\tilde{c}$).

\begin{rmk}\label{rmk:defundef}
Recall that we have seen in \eqref{eq:defundef} that there exists a natural equivalence $\Pi_n(\eC, 0) \simeq \Pi_n(\eC)$. This equivalence identifies left Calabi--Yau structures $\eta_{\Pi_n(\eC, 0)}$ and $\eta_{\Pi_n(\eC)}$.
\end{rmk}

\section{Deformed Calabi--Yau completion and torsor}\label{sec:main}

The aim of this section is to prove Theorem \ref{thm:intromain}.
To do this, we first describe the $n$-Calabi--Yau completion of $\Qcoh(Y)$ for smooth variety $Y$ in \S \ref{ssec:CYcompQ}.
We deduce the deformed version from the undeformed version in \S \ref{ssec:dCYcomp}.

\subsection{Calabi--Yau completion of $\Qcoh(Y)$}\label{ssec:CYcompQ}

Let $Y$ be a smooth variety of dimension $d$.
We let $\eA_n \in \Alg(\Qcoh(Y))$ be the free algebra generated by $\omega_Y^{-1}[-n]$. We prove the following statement which generalizes a result of Ikeda--Qiu \cite[\S 2.5]{IQ18}:

\begin{prop}\label{prop:CYcomp}
We have an equivalence of $k$-linear stable $\infty$-categories
\[
\Pi_n(\Qcoh(Y)) \simeq \RMod_{\eA_{n-d-1}}(\Qcoh(Y)).
\]

\begin{proof}
Let $U \colon \RMod_{\eA_{n-d-1}}(\Qcoh(Y)) \to \Qcoh(Y)$ be the forgetful functor. The functor $U$ admits a left adjoint $U^L$ given by tensoring $\eA_{n-d-1}$.
It is clear that $U$ is conservative and colimit preserving.
Therefore $U$ is monadic.
Now we need to show that the monad $M = U \circ U^L$ is equivalent to the monad $T_{\Id_{\Qcoh(X)}^![n-1]}$.
This follows immediately from Example \ref{ex:invQcoh} and \cite[4.1.1.18]{HA}.
\end{proof}

\end{prop}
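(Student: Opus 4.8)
The plan is to identify $\Pi_n(\Qcoh(Y)) = T_{\Qcoh(Y)}(\Id^!_{\Qcoh(Y)}[n-1])$ with $\RMod_{\eA_{n-d-1}}(\Qcoh(Y))$ by recognizing both sides as modules over the same monad, and then invoking the Barr--Beck--Lurie theorem. First I would consider the forgetful functor $U \colon \RMod_{\eA_{n-d-1}}(\Qcoh(Y)) \to \Qcoh(Y)$. Since $\eA_{n-d-1} \in \Alg(\Qcoh(Y))$, the functor $U$ has a left adjoint $U^L = -\otimes \eA_{n-d-1}$, it is visibly conservative (an $\eA_{n-d-1}$-module is zero iff its underlying object is), and it preserves all colimits because the relative tensor product does. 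Hence by Barr--Beck--Lurie, $U$ is monadic, and $\RMod_{\eA_{n-d-1}}(\Qcoh(Y)) \simeq \LMod_M(\Qcoh(Y))$ for the endomorphism monad $M = U\circ U^L$, whose underlying endofunctor is $-\otimes\eA_{n-d-1}$.

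Next I would compute $M$ as a monad. By Example \ref{ex:invQcoh}, $\Id^!_{\Qcoh(Y)} = -\otimes\omega_Y^{-1}[-d]$, so $\Id^!_{\Qcoh(Y)}[n-1] = -\otimes\omega_Y^{-1}[n-1-d]$, which is exactly tensoring with the generator $\omega_Y^{-1}[-(n-d-1)]$ of $\eA_{n-d-1}$. The tensor algebra $\infty$-category $T_{\Qcoh(Y)}(\Id^!_{\Qcoh(Y)}[n-1]) = \LMod_{TF}(\Qcoh(Y))$ is built from the free monad $TF$ on $F = \Id^!_{\Qcoh(Y)}[n-1]$. I would then apply \cite[Proposition 4.1.1.18]{HA}: the free algebra on an object of a monoidal $\infty$-category in which the tensor product preserves colimits in each variable is the tensor algebra $\bigoplus_{k\geq 0} F^{\otimes k}$, and $\LFun_k(\Qcoh(Y),\Qcoh(Y)) \simeq \Qcoh(Y\times Y)$ with monoidal structure given by convolution (again by \cite{BFN10}), under which the full subcategory of functors "tensor with a line bundle shift" is the monoidal subcategory generated by $\omega_Y^{-1}[-(n-d-1)]$. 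So the free monad $TF$ corresponds precisely to the free algebra $\eA_{n-d-1}$ in $\Alg(\Qcoh(Y))$, giving $M \simeq TF$ as monads, and therefore $\RMod_{\eA_{n-d-1}}(\Qcoh(Y)) \simeq \LMod_{TF}(\Qcoh(Y)) = \Pi_n(\Qcoh(Y))$.

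The main obstacle — though it is essentially bookkeeping rather than a genuine difficulty — is making sure that the identification of $\LFun_k(\Qcoh(Y),\Qcoh(Y))$ with $\Qcoh(Y\times Y)$ of Example \ref{ex:invQcoh} is compatible, as a \emph{monoidal} equivalence, with the composition-versus-convolution product, and that under it the free monad generated by the endofunctor $-\otimes\omega_Y^{-1}[-(n-d-1)]$ matches the free associative algebra $\eA_{n-d-1}$ generated by the object $\omega_Y^{-1}[-(n-d-1)]$; this is where the hypothesis that $Y$ be smooth (so that $\Qcoh(Y)$ is compactly generated and self-dual, and $\Id^!$ is the expected line-bundle twist) is used. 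Once that compatibility is in hand, the proof is the two-line Barr--Beck argument together with the universal property of the free algebra. I note that this genuinely generalizes Ikeda--Qiu \cite[\S 2.5]{IQ18}, who treat the case $n = d+1$ where $\eA_0 = \Sym_{\Qcoh(Y)}(\omega_Y^{-1})$ recovers $\Qcoh(\Tot_Y(\omega_Y))$.
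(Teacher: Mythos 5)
Your proof is correct and follows essentially the same route as the paper's: monadicity of the forgetful functor $U$ via Barr--Beck--Lurie, followed by identifying the endomorphism monad $-\otimes\eA_{n-d-1}$ with the free monad on $\Id^!_{\Qcoh(Y)}[n-1]$ using Example \ref{ex:invQcoh} and \cite[Proposition 4.1.1.18]{HA}; the monoidal compatibility of the $\Qcoh(Y\times Y)\simeq\LFun_k(\Qcoh(Y),\Qcoh(Y))$ identification that you spell out is exactly the point the paper leaves implicit. One bookkeeping caveat: since $\Id^!_{\Qcoh(Y)}[n-1]=-\otimes\omega_Y^{-1}[n-d-1]$ and the paper's convention is that $\eA_m$ is generated by $\omega_Y^{-1}[-m]$, the relevant algebra is generated by $\omega_Y^{-1}[n-d-1]$, i.e.\ it is $\eA_{d+1-n}$ rather than $\eA_{n-d-1}$ (the two agree only when $n=d+1$) --- your step equating $\omega_Y^{-1}[n-1-d]$ with the generator $\omega_Y^{-1}[-(n-d-1)]$ silently flips this sign, a slip inherited from the paper's own statement, which is corroborated by the paper's later use of $\eA_1$ (not $\eA_{-1}$) for $\Pi_d(\Qcoh(Y))$ in the proof of Theorem \ref{thm:defCYcomp}.
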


\begin{cor}[{\cite[\S 2.5]{IQ18}}]
Write $X \coloneqq \Tot_Y(\omega_Y)$.
Then we have an equivalence of $k$-linear stable $\infty$-categories
\begin{equation}\label{eq:IQ}
\Pi_{d+1}(\Qcoh(Y)) \simeq \Qcoh(X).
\end{equation}
Further, this equivalence identifies the left Calabi--Yau structure $\eta_{\Pi_{d+1}(\Qcoh(Y))}$ and the Calabi--Yau structure on $\Qcoh(X)$ induced by the natural Calabi--Yau form on $X$.
\end{cor}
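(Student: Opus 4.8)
The plan is to obtain the equivalence \eqref{eq:IQ} directly from the structural description of the undeformed Calabi--Yau completion, Proposition~\ref{prop:CYcomp}, and to obtain the compatibility of Calabi--Yau structures from the affine computation Example~\ref{ex:localmain} by a Zariski-local argument. Concretely, I would first apply Proposition~\ref{prop:CYcomp} with $n = d+1$: this identifies $\Pi_{d+1}(\Qcoh(Y))$ with $\RMod_{\eA_0}(\Qcoh(Y))$, where $\eA_0 \in \Alg(\Qcoh(Y))$ is the free associative algebra generated by $\omega_Y^{-1}[0] = \omega_Y^{-1}$. Since $\omega_Y^{-1}$ is an invertible sheaf, its tensor powers $(\omega_Y^{-1})^{\otimes m} \simeq \omega_Y^{-m}$ are discrete and locally free of rank one, so the free associative algebra coincides with the free commutative algebra and $\eA_0$ is the discrete $\cO_Y$-algebra $\bigoplus_{m \ge 0}\omega_Y^{-m} = \Sym_{\cO_Y}(\omega_Y^{-1})$. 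Since $X = \Tot_Y(\omega_Y)$ is by definition $\Spec_Y\bigl(\Sym_{\cO_Y}(\omega_Y^{-1})\bigr)$ and the projection $\pi \colon X \to Y$ is affine, the pushforward $\pi_*$ is $k$-linear, conservative, and admits the left adjoint $\pi^*$, hence is monadic by Barr--Beck--Lurie; its monad is $-\otimes_{\cO_Y}\Sym_{\cO_Y}(\omega_Y^{-1}) \simeq -\otimes_{\cO_Y}\eA_0$, which is precisely the monad whose modules form $\RMod_{\eA_0}(\Qcoh(Y))$. Composing the resulting equivalences gives \eqref{eq:IQ}.

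For the Calabi--Yau structures, the key point is a dimension count: since $\dim X = d+1$, the isomorphisms \eqref{eq:HQcoh} identify $\HC^{-}_{d+1}(\Qcoh(X))$ with $\HH_{d+1}(\Qcoh(X)) \cong H^0(X, \omega_X) = \Gamma(X, \omega_X)$, so a negative cyclic class $k[d+1] \to \HC^{-}(\Qcoh(X))$ is the same datum as a global section of $\omega_X$, and such sections are detected on any open cover of $X$ --- unlike degree-$d$ Hochschild classes, which would also involve $H^1(X, \omega_X)$. Hence it is enough to show that $\eta_{\Pi_{d+1}(\Qcoh(Y))}$, transported along \eqref{eq:IQ} and \eqref{eq:HQcoh}, restricts over each member $Y_i$ of an affine open cover $\{Y_i\}$ of $Y$ to the natural Calabi--Yau form. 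For this I would use that the restriction functor $j_i^* \colon \Qcoh(Y) \to \Qcoh(Y_i)$ is $k$-linear, preserves compact objects, and intertwines the inverse dualizing functors (because $j_i^*\omega_Y^{-1} \simeq \omega_{Y_i}^{-1}$); it therefore induces a $k$-linear functor $\Pi_{d+1}(\Qcoh(Y)) \to \Pi_{d+1}(\Qcoh(Y_i))$ carrying the tautological Hochschild class, and hence $\eta_{\Pi_{d+1}(\Qcoh(Y))}$, to the corresponding class for $Y_i$. Moreover \eqref{eq:IQ} is compatible with base change along $X_i \coloneqq \pi^{-1}(Y_i) = \Tot_{Y_i}(\omega_{Y_i}) \hookrightarrow X$, and under \eqref{eq:HQcoh} this restriction becomes the usual restriction of forms. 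By Example~\ref{ex:localmain}, $\eta_{\Pi_{d+1}(\Qcoh(Y_i))}$ corresponds to the natural Calabi--Yau form on $X_i$, which is the restriction of the natural Calabi--Yau form on $X$. Since $\Gamma(X, \omega_X) \hookrightarrow \prod_i \Gamma(X_i, \omega_{X_i})$ is injective, it follows that $\eta_{\Pi_{d+1}(\Qcoh(Y))}$ corresponds to the natural Calabi--Yau form on $X$, as desired.

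The part I expect to be the main obstacle is the naturality used above. The first paragraph is a routine identification of free algebras, and the reduction of the comparison to the top-degree Hochschild class is handled uniformly by \eqref{eq:HQcoh}; but the second paragraph requires checking that the construction of the tautological Hochschild class $\theta_{\Pi_n(\eC)}$ (via the adjunction $(\pi^*, \pi_*)$, the natural transformation \eqref{eq:pullHoch}, and the tautological map $u$) is natural with respect to $k$-linear functors $\eC \to \eC'$ that preserve compact objects and intertwine the inverse dualizing functors, and that the equivalence \eqref{eq:IQ} carries restriction to opens on the $\Pi$-side to restriction to opens on the $\Qcoh$-side compatibly with the HKR decomposition. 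These facts are ``clear from the construction'' but do require some care to make precise; once they are in place, the Zariski-local comparison with Example~\ref{ex:localmain} finishes the proof.
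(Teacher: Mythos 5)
Your proposal is correct and follows essentially the same route as the paper: the equivalence \eqref{eq:IQ} is obtained from Proposition~\ref{prop:CYcomp} together with the identification of modules over $\pi_*\cO_X$ with $\Qcoh(X)$ (which you establish by hand via Barr--Beck--Lurie where the paper simply cites \cite[Proposition 6.3.4.6]{SAG}), and the comparison of Calabi--Yau structures is reduced to the affine case of Example~\ref{ex:localmain} using exactly the sheaf property of $\HC^{-}_{d+1}(\Qcoh(X)) \cong H^0(X,\omega_X)$ from \eqref{eq:HQcoh}. The naturality checks you flag as the main obstacle are left implicit in the paper's one-line reduction, so your write-up is if anything more complete.
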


\begin{proof}
The equivalence \eqref{eq:IQ} follows immediately from \cite[Proposition 6.3.4.6]{SAG} and Proposition \ref{prop:CYcomp}.
To prove that this equivalence preserves the left Calabi--Yau structure, we may assume $Y$ is affine since the negative cyclic homology group $\HC^{-}_{d + 1}(\Qcoh(X)) \cong H^0(X, \omega_X)$
satisfies the sheaf property.
In this case the claim follows from Example \ref{ex:localmain}.
\end{proof}

\subsection{Deformed Calabi--Yau completion of $\Qcoh(Y)$}\label{ssec:dCYcomp}

Let $Y$ be a smooth variety of dimension $d$.
Take a cohomology class $c \in H^1(Y, \omega_Y)$.
Using the natural inclusion $ H^1(Y, \omega_Y) \hookrightarrow \HH_{d-1}(\Qcoh(Y))$, we can regard $c$ as a $(d-1)$-th Hochschild homology class.
Further, the short exact sequence \eqref{eq:HCQcoh2} implies that $c$ naturally lifts to a negative cyclic class $\tilde{c}$.

Let $X$ be the total space of the $\omega_Y$-torsor corresponding to $c$.
Then we have the following statement:

\begin{thm}\label{thm:defCYcomp}
We have an equivalence of $k$-linear stable $\infty$-categories
\begin{equation}\label{eq:defCYcomp}
\Pi_{d + 1}(\Qcoh(Y), \tilde{c}) \simeq \Qcoh(X).
\end{equation}
Further, this equivalence identifies the left Calabi--Yau structure $\eta_{\Pi_{d + 1}(\Qcoh(Y))}$ and the Calabi--Yau structure on $\Qcoh(X)$ induced by the natural Calabi--Yau form on $X$.
\end{thm}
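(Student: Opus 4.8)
The plan is to run the same monadic strategy used for the undeformed case in Proposition~\ref{prop:CYcomp}, now exploiting the pushout description of both sides. On the algebraic side, by the Remark following \eqref{eq:defCY} we have $\Pi_{d+1}(\Qcoh(Y),\tilde c)\simeq \LMod_{T_c\Id^!_{\Qcoh(Y)}[d]}(\Qcoh(Y))$, where $T_c\Id^!_{\Qcoh(Y)}[d]$ is the monad defined by the pushout of $i_c^{\#}$ and $i_0^{\#}$ out of the free monad $T\Id^!_{\Qcoh(Y)}[d-1]$; by Example~\ref{ex:invQcoh} this free monad is the one attached to the algebra $\eA_{-2}$ generated by $\omega_Y^{-1}[-(d-1)]$, i.e.\ in degree $-(d-1)$. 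So I want to identify $\Qcoh(X)$, for $X$ the $\omega_Y$-torsor attached to $c$, with modules over the pushed-out monad.

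First I would describe $\Qcoh(X)$ monadically. Let $p\colon X\to Y$ be the (affine) projection, so that $p_*\colon\Qcoh(X)\to\Qcoh(Y)$ is conservative and colimit-preserving, hence monadic by Barr--Beck--Lurie; the relevant monad is $p_*p^*$, which is $-\otimes_{\mcO_Y} p_*\mcO_X$. The key geometric input is that $p_*\mcO_X$, as a sheaf of $\mcO_Y$-algebras, is the \emph{twisted/deformed} symmetric algebra on $\omega_Y^{-1}$ determined by the torsor class $c\in H^1(Y,\omega_Y)$: concretely it is presented as a pushout of graded algebras encoding the extension $0\to\mcO_Y\to p_*\mcO_X\to(\text{the rest})$ with the class $c$ as obstruction, matching exactly the pushout defining $T_c$. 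I would make this precise by unwinding the construction of the $\omega_Y$-torsor: a \v Cech representative of $c$ glues the affine charts $\Tot(\omega_Y)$ by translations, and $p_*\mcO_X$ is the corresponding twisted algebra, whose underlying $\mcO_Y$-module is still $\bigoplus_{m\ge 0}\omega_Y^{-m}$ but whose multiplication is deformed by $c$. This is the analogue, on the commutative side, of the monad pushout $T_2\coprod_{T_1}T_3$, and I would phrase the comparison so that Lemma~\ref{lem:monadpush} does the bookkeeping: the square of module categories $\Qcoh(Y)\leftarrow\Pi_d(\Qcoh(Y))\to\Qcoh(Y)$ with the maps $i_0^*,i_c^*$ pushes out to $\Qcoh(X)$ because the corresponding square of monads pushes out to $p_*p^*$.

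Concretely, the steps in order: (1) recall $\Pi_{d+1}(\Qcoh(Y),\tilde c)\simeq\LMod_{T_c\Id^![d]}(\Qcoh(Y))$ and unpack $T_c$ as the pushout of monads; (2) show $p_*\mcO_X$ fits into the commutative-algebra pushout over $\Sym_{\mcO_Y}(\omega_Y^{-1}[-(d-1)])$ that, after applying $\LMod_{(-)}$, gives the monad pushout of (1) --- here I would use that $-\otimes_{\mcO_Y}(-)$ is monoidal so algebra pushouts go to monad pushouts; (3) invoke Barr--Beck--Lurie (as in Proposition~\ref{prop:CYcomp}) to get $\Qcoh(X)\simeq\LMod_{p_*p^*}(\Qcoh(Y))$; (4) combine to get \eqref{eq:defCYcomp}; (5) for the Calabi--Yau statement, reduce to $Y$ affine by the sheaf property of $\HC^-_{d+1}(\Qcoh(X))\cong H^0(X,\omega_X)$ (as in the undeformed corollary), and on an affine chart compare the tautological Hochschild class $\theta$ transported through $i_c^*$ with the tautological $d$-form, using Example~\ref{ex:localmain}, Remark~\ref{rmk:defundef}, and the fact that the torsor $X$ is \'etale-locally (indeed Zariski-locally after shrinking) isomorphic to $\Tot_Y(\omega_Y)$ so the natural Calabi--Yau form on $X$ restricts to the standard one.

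The main obstacle I anticipate is step (2): making rigorous the claim that the sheaf of algebras $p_*\mcO_X$ for the $c$-twisted torsor is exactly the pushout $\mcO_Y\coprod_{\Sym(\omega_Y^{-1}[-(d-1)])}\mcO_Y$ in commutative $\mcO_Y$-algebras with attaching maps dual to $0$ and $c$ --- i.e.\ that the "deformation by $c$" appearing in Keller's construction matches the geometric twisting of the torsor. One has to be careful that $c$ lives in $H^1(Y,\omega_Y)$ (a genuine cohomology class, not just a global section), so the pushout must be taken in a derived sense and its $\pi_0$ recovered; checking that this derived pushout of algebras is discrete and equals the expected twisted algebra is the real content. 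Once that identification is in hand, everything else is a formal consequence of Barr--Beck--Lurie and Lemma~\ref{lem:monadpush}, exactly paralleling the undeformed argument.
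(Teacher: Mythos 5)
Your overall architecture coincides with the paper's: describe both sides as module categories over (pushouts of) monads/algebras on $\Qcoh(Y)$ via Barr--Beck--Lurie, and reduce the Calabi--Yau comparison to the affine case using Remark \ref{rmk:defundef} and Example \ref{ex:localmain}. Your steps (1), (3), (4), (5) match what the paper does (it invokes \cite[Corollary 4.8.5.13]{HA} where you appeal to monoidality of $-\otimes_{\cO_Y}(-)$, and \cite[Proposition 6.3.4.6]{SAG} for the monadic description of $\Qcoh(X)$). However, the step you yourself flag as ``the real content'' --- identifying $\pi_*\cO_X$ with the algebra pushout of $\cO_Y \leftarrow \eA_1 \to \cO_Y$ along $i_0^{\#}$ and $i_c^{\#}$ --- is left genuinely open, and your proposed route to it is problematic. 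Your \v Cech picture is incorrect as stated: for a nontrivial torsor the underlying $\cO_Y$-module of $\pi_*\cO_X$ is \emph{not} $\bigoplus_{m\ge 0}\omega_Y^{-m}$; it only carries a filtration with that associated graded (already $0\to\cO_Y\to F_1\to\omega_Y^{-1}\to 0$ is the nonsplit extension classified by $c$), so ``same module, deformed multiplication'' is the wrong model, and you give no mechanism for verifying the pushout property beyond noting that it needs to be checked.

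The paper resolves this step with two devices you are missing. First, it writes $X$ as the derived fiber product $Y\times_{[Y/\omega_Y]}Y$ of the zero map and the map classified by $c$ into the quotient stack for the trivial $\Tot_Y(\omega_Y)$-action; pushing the structure sheaves down to $Y$ canonically produces the commuting square of algebras under $\Sym_{\cO_Y}(\omega_Y^{-1}[-1])$, with no cocycle manipulation. (Note the shift: since $\Id^!_{\Qcoh(Y)}=-\otimes\omega_Y^{-1}[-d]$ by Example \ref{ex:invQcoh}, the generator of the free monad for $\Pi_d$ is $\omega_Y^{-1}[-1]$, i.e.\ the algebra is $\eA_1$ --- not $\omega_Y^{-1}[-(d-1)]$ or $\eA_{-2}$ as you write; this is consistent with $c\in H^1(Y,\omega_Y)=\Hom(\omega_Y^{-1}[-1],\cO_Y)$.) Second, after precomposing with $\eA_1\to\Sym_{\cO_Y}(\omega_Y^{-1}[-1])$ (the pushout defining the deformed completion is over the free \emph{associative} algebra $\eA_1$, not over $\Sym$, so this precomposition is not cosmetic), the pushout property of the resulting square is verified Zariski-locally: on affines $c$ trivializes, both sides become the free algebra on $\omega_Y^{-1}$, and the claim reduces to the fact that free algebra formation commutes with pushouts. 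This local reduction --- which you only deploy for the Calabi--Yau comparison --- is exactly what replaces the discreteness/identification check you leave unresolved.
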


\begin{proof}
It follows from the diagram \eqref{eq:defCY} and Proposition \ref{prop:CYcomp} that we have the following pushout square:
\[
    \xymatrix{
    \RMod_{\eA_1}(\Qcoh(Y)) \ar[r]^-{i_0^*} \ar[d]^-{i_c^*} & \Qcoh(Y) \ar[d] \\
    \Qcoh(Y) \ar[r] & \Pi_{d+1}(\Qcoh(Y), c) \pocorner.
    }
\]
Now define an algebra object $\eA_{0, c} \in \Alg(\Qcoh(Y))$ by the pushout
\[
    \xymatrix{
    \eA_1 \ar[r]^-{i_0^{\#}} \ar[d]^-{i_c^{\#}} & \cO_Y \ar[d] \\
    \cO_Y \ar[r] & \eA_{0, c} \pocorner.
    }
\]
Then it follows from \cite[Corollary 4.8.5.13]{HA} that there exists an equivalence
\[
\Pi_{d+1}(\Qcoh(Y), c) \simeq \RMod_{\eA_{0, c}}(\Qcoh(Y)).
\]

Let $\pi \colon X \to Y$ be the natural projection.
Then it is enough to show the following equivalence in $\Alg(\Qcoh(Y))$
\[
\eA_{0, c} \simeq \pi_* \cO_X.
\]
To do this, consider the following diagram
\[
    \xymatrix{
    X \ar[r]^-{\pi}  \ar[d]^-{\pi} \pbcorner & Y \ar[d]^-{0} \\
    Y \ar[r]^-{c} & [Y / \omega_Y] .
    }
\]
Here $[Y / \omega_Y]$ is the quotient stack by the trivial action of $\Tot_Y(\omega_Y)$ on $Y$.
By pushing down the structure sheaves of schemes and stacks appearing in the above diagram to $Y$, we obtain the following commutative diagram in $\Alg(\Qcoh(Y))$:
\[
\xymatrix{
\Sym_{\cO_Y}(\omega_Y^{-1}[-1]) \ar[r] \ar[d] & \cO_Y \ar[d] \\
\cO_Y \ar[r] & \pi_* \cO_X.
}
\]
By precomposing the natural map $\eA_1 \to \Sym_{\cO_Y}(\omega_Y^{-1}[-1])$, we obtain the following diagram
\[
\xymatrix{
\eA_1 \ar[r] \ar[d] & \cO_Y \ar[d] \\
\cO_Y \ar[r] & \pi_* \cO_X.
}
\]
To prove the equivalence \eqref{eq:defCYcomp} it is enough to show that this diagram is a pushout square.
To do this, we may assume that $Y$ is affine in which case $c$ is automatically equivalent to zero.
In this case, the algebra $\pi_* \cO_X$ is the free algebra generated by $\omega_Y^{-1}$. Therefore the claim follows from the fact that the formation of the free algebra commutes with the pushout.

Now we need to show that the equivalence \eqref{eq:defCYcomp} preserves left Calabi--Yau structure.
To do this, we may assume $Y$ is affine.
In this case the cohomology class $c \in H^1(Y, \omega_Y)$ automatically vanishes.
Then the claim follows from Remark \ref{rmk:defundef} and Example \ref{ex:localmain}.

\end{proof}

\section{Application to local curves}\label{sec:App}\label{sec:app}

We now apply Theorem \ref{thm:defCYcomp} to the geometry of the derived moduli stack of coherent sheaves on local curves.
Here \textbf{local curve} is a Calabi--Yau threefold of the form $\Tot_C(N)$ where $C$ is a smooth projective curve and $N$ is a rank two vector bundle with $\det(N) \cong \omega_C$.

\subsection{Shifted symplectic structure}

Here we briefly recall the theory of shifted symplectic structure introduced in \cite{PTVV13}.
Let $X$ be a derived Artin stack over $k$.
Define the space of \textbf{$n$-shifted $p$-forms} on $X$ by
\[
\cA^p(X, n) \coloneqq \Gamma(X, \wedge^2 \bL_X[n]).
\]
where $\bL_X$ is the cotangent complex of $X$.
We can also define the space of \textbf{$n$-shifted closed $p$-forms} $\cA^{p, \cl}(X, n)$. See \cite[Definition 1.12]{PTVV13} for the detail.
Roughly speaking, an $n$-shifted closed $p$-form is given by 
an $n$-shifted $p$-form $\omega_0$, a homotopy $\omega_1 \colon \ddr \omega_0 \sim 0$, a homotopy $\ddr \omega_2 \colon d\omega_1 \sim 0$ and so on.
We have a forgetful map
\[
\pi \colon \cA^{p, \cl}(X, n) \to \cA^{p}(X, n)
\]
and the de Rham differential map
\[
\ddr \colon \cA^{p}(X, n) \to \cA^{p+1, \cl}(X, n).
\]

\begin{defin}
Let $X$ be a derived Artin stack.
An \textbf{$n$-shifted symplectic structure} is an $n$-shifted closed $2$-form $\omega$ on $X$ whose underlying 
$n$-shifted $2$-form $\omega_0$ induces an equivalence
\[
\bL_{X}^{\vee} \xrightarrow[]{\simeq} \bL_{X}[n].
\]
\end{defin}

\begin{ex}
Let $X$ be a derived Artin stack and $f \in \Gamma(X, \cO_X)$ be a function on it.
The derived critical locus $\Crit(f)$ is given by the following Cartesian square
\[
\xymatrix{
\Crit(f) \pbcorner \ar[r] \ar[d] & X \ar[d]^-{0} \\
X \ar[r]^-{\ddr f} & \Tot_{X}(\bL_X)
}
\]
Then it follows from \cite[\S 4.2.1]{BCS20} that $\Crit(f)$ carries a canonical $-1$-shifted symplectic structure.
\end{ex}

\subsection{Moduli of objects in a Calabi--Yau category}

Let $\eC$ be a compactly generated $k$-linear stable $\infty$-category. 
Following \cite{TV07}, we define the moduli of objects in $\eC$ to be the prestack $\fM_C$ whose value on a commutative dg-algebra $A$ is the space
\[
\fM_{\eC}(A) = \LFun_{k}^{\omega}(C, \Mod_{A})
\]
where $\LFun_{k}^{\omega}$ denotes the space of $k$-linear functors which preserves compact objects.
Assume that $\eC$ is \textbf{of finite type}, i.e., a compact object in the $\infty$-category $\Catkcg$ of compactly generated $k$-linear stable $\infty$-category with $k$-linear functors which preserves compact objects.
Then it is shown in \cite[Proposition 2.14, Theorem 3.6]{TV07} respectively that $\eC$ is smooth and the prestack $\fM_{\eC}$ is locally an Artin stack of finite presentation over $k$.

\begin{thm}[{\cite[Theorem 5.5]{BDII}}]
Let $\eC$ be a finite type $k$-linear stable $\infty$-category and $c \colon k[n] \to \HC^{-}(\eC)$ be a left Calabi--Yau structure.
Then $\fM_{\eC}$ carries a natural $(2-n)$-shifted symplectic structure.
\end{thm}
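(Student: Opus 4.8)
The plan is to reconstruct the argument of Brav--Dyckerhoff \cite[\S 5]{BDII}, which is the non-commutative counterpart of the transgression construction of \cite{PTVV13}: a left $n$-Calabi--Yau structure on $\eC$ should ``transgress'' along the universal object of $\fM_{\eC}$ to a $(2-n)$-shifted \emph{closed} $2$-form, whose nondegeneracy is then read off directly from the defining equivalence $\Id_{\eC}[n] \simeq \Id_{\eC}^!$. First I would invoke the cited results of Toën--Vaquié \cite[Proposition 2.14, Theorem 3.6]{TV07}: since $\eC$ is of finite type it is smooth, and $\fM_{\eC}$ is a derived Artin stack locally of finite presentation over $k$, so in particular it has a perfect cotangent complex $\bL_{\fM_{\eC}}$. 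Next I would record the standard description of $\bL_{\fM_{\eC}}$ in terms of the universal object $\cE$, equivalently in terms of the $k$-linear functor $\eC \to \Perf(\fM_{\eC})$ preserving compact objects that classifies the universal family: at a $T$-point given by $F \colon \eC \to \Mod_A$ one has $\bL_{\fM_{\eC}}^{\vee} \simeq \RHom(\cE,\cE)[1]$, the shifted Hochschild cochain complex of the family, and dually $\bL_{\fM_{\eC}} \simeq \RHom(\cE,\cE)^{\vee}[-1]$. Finally I would fix once and for all the equivalence $\Id_{\eC}[n] \simeq \Id_{\eC}^!$ supplied by the left Calabi--Yau class.

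Then I would construct the form. The functor $\eC \to \Perf(\fM_{\eC})$ preserves compact objects, so by functoriality of Hochschild homology together with its $S^1$-action (\cite[Theorem 2.14]{HSS17}, recalled above) it induces an $S^1$-equivariant map
\[
\HH(\eC) \longrightarrow \HH(\Perf(\fM_{\eC})) \simeq \bigoplus_p \Gamma\big(\fM_{\eC}, \wedge^p \bL_{\fM_{\eC}}[p]\big),
\]
where the last identification is the (local) HKR/non-commutative Hodge--de Rham comparison, valid étale-locally on $\fM_{\eC}$ since there it is a derived affine scheme of finite presentation; crucially this identification matches the mixed differential $\delta$ on $\HH$ with the de Rham differential $\ddr$ on forms. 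Passing to homotopy $S^1$-fixed points identifies negative cyclic classes with \emph{closed} forms, and gluing the local comparisons, the class $c \in \HC^{-}_n(\eC)$ produces a closed $2$-form $\omega \in \cA^{2,\cl}(\fM_{\eC}, 2-n)$. Under $\bL_{\fM_{\eC}}^{\vee} \simeq \RHom(\cE,\cE)[1]$ its underlying $2$-form $\omega_0$ is the pairing $\RHom(\cE,\cE) \otimes \RHom(\cE,\cE) \to \cO_{\fM_{\eC}}[-n]$ given by composition of endomorphisms followed by the trace twisted by the Calabi--Yau class; after shifting each factor by $[1]$ this is precisely an element of $\cA^2(\fM_{\eC}, 2-n)$, and the content of lifting $\omega_0$ to $\HC^{-}$ is exactly the tower of homotopies witnessing closedness (compatibly with the forgetful map $\HC^{-} \to \HH$ matching $\pi \colon \cA^{2,\cl} \to \cA^2$).

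It remains to check nondegeneracy, where I would feed in the Calabi--Yau condition $\Id_{\eC}[n] \simeq \Id_{\eC}^!$: transporting this equivalence along the universal functor and using the two descriptions of $\bL_{\fM_{\eC}}$ and $\bL_{\fM_{\eC}}^{\vee}$ above, one obtains an equivalence $\bL_{\fM_{\eC}}^{\vee} \xrightarrow{\ \simeq\ } \bL_{\fM_{\eC}}[2-n]$, which one checks agrees with the contraction map induced by $\omega_0$; hence $\omega$ is a $(2-n)$-shifted symplectic structure, as claimed. The main obstacle is the construction in the middle step: one must show that the Connes/mixed differential on $\HH$ of the moduli stack corresponds to the de Rham differential on forms (so that negative cyclic classes genuinely land in \emph{closed} forms, not merely forms), and glue the local Hodge--de Rham comparisons over a stack that is only \emph{locally} of finite type; granting this, which is the substance of \cite[\S 5]{BDII}, the rest is bookkeeping with the identifications of the first and third steps.
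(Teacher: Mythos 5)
The paper does not prove this statement---it is quoted directly from \cite[Theorem 5.5]{BDII}---and your sketch is a faithful reconstruction of the argument given there: finiteness/smoothness and the description of $\bL_{\fM_{\eC}}$ via $\RHom(\cE,\cE)[1]$ from \cite{TV07}, transgression of the negative cyclic class along the universal functor to a closed $2$-form of degree $2-n$, and nondegeneracy read off from $\Id_{\eC}[n]\simeq\Id_{\eC}^{!}$ together with the self-duality of $\RHom(\cE,\cE)$ coming from smoothness and compactness. The points you flag as the real content (matching the mixed differential with $\ddr$, and working chart by chart on the locally geometric stack) are indeed where the work lies in \cite{BDII}, so your proposal is correct and takes essentially the same route as the cited source.
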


\begin{ex}
Let $X$ be a smooth variety of dimension $n$.
In this case, the moduli space $\fM_X \coloneqq \fM_{\Qcoh(X)}$ of objects in $\Qcoh(X)$ parametrizes compactly supported perfect complexes on $X$.
Assume that $X$ carries a Calabi--Yau form.
As we have seen in Example \ref{ex:CY}, the Calabi--Yau form induces a left $n$-Calabi--Yau structure on $\Qcoh(X)$ and the above theorem implies 
that $\fM_X$ carries a natural $(2-n)$-shifted symplectic structure.
\end{ex}

Let $\eC$ be a $k$-linear stable $\infty$-category of finite type and $c \colon k[n-1] \to \HC^{-}(\eC)$ be a negative cyclic homology class. Then it is shown in \cite[\S 3.3]{Yeu16} that $\Pi_{n+1}(\eC, c)$ is of finite type. 

The following theorem, which is a special case of \cite[Corollary 6.19]{BCS20}, describes the moduli space of objects in a certain deformed $3$-Calabi--Yau completion:

\begin{thm}[{\cite[Corollary 6.19]{BCS20}}]\label{thm:BCS}
Let $\eC$ be a $k$-linear stable $\infty$-category of finite type and $ c \colon k \to \HH(\eC)$ be a zeroth Hochschild homology class. Let $\delta \colon \HH(\eC)[1] \to \HC^-(\eC)$ be the map induced from the mixed differential. Then there exists a natural function $f$ on $\fM_{\eC}$ such that there exists a natural equivalence of $-1$-shifted symplectic derived locally Artin stacks
\[
\fM_{\Pi_3(\eC, \delta c)} \simeq \Crit(f).
\]
\end{thm}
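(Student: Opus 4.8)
The plan is to apply the moduli functor $\eD\mapsto\fM_{\eD}$ to the pushout presentation of the deformed Calabi--Yau completion and to recognise the resulting fibre product of derived stacks as a derived critical locus. First I would record that the negative cyclic class $\delta c\in\HC^{-}_1(\eC)$ has underlying Hochschild class $\bar c\coloneqq\pi(\delta c)\in\HH_1(\eC)$, which by the Connes sequence is the image of $c$ under the mixed differential. Taking $n=2$ in \eqref{eq:defCY}, the category $\Pi_3(\eC,\delta c)$ then sits in a pushout square in $\Catkcg$ with vertices $\Pi_2(\eC)$, $\eC$, $\eC$, $\Pi_3(\eC,\delta c)$ whose two legs out of $\Pi_2(\eC)$ are $i_0^{*}$ and $i_{\bar c}^{*}$. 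Since all four categories are of finite type (for $\Pi_3(\eC,\delta c)$ this is \cite[\S 3.3]{Yeu16}), and since $\fM_{\eD}(A)=\LFun_k^{\omega}(\eD,\Mod_A)$ is computed pointwise so that the contravariant assignment $\eD\mapsto\fM_{\eD}$ sends colimits of categories to limits of prestacks, applying the moduli functor turns this pushout into a pullback of derived locally Artin stacks
\[
\fM_{\Pi_3(\eC,\delta c)}\simeq\fM_{\eC}\times_{\fM_{\Pi_2(\eC)}}\fM_{\eC},
\]
the two maps $\fM_{\eC}\to\fM_{\Pi_2(\eC)}$ being induced by $i_0^{*}$ and $i_{\bar c}^{*}$.

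The crux is then to identify $\fM_{\Pi_2(\eC)}=\fM_{T_{\eC}(\Id^{!}_{\eC}[1])}$ with the total space $\Tot_{\fM_{\eC}}(\bL_{\fM_{\eC}})$ of the cotangent complex of $\fM_{\eC}$, carrying its canonical $0$-shifted symplectic structure. This rests on two points: first, a moduli-theoretic counterpart of Proposition \ref{prop:vssmall}, namely that for a $k$-linear endofunctor $F$ of $\eC$ the moduli functor carries the tensor-algebra category $T_{\eC}(F)$ to the relative $\Spec$ over $\fM_{\eC}$ of the symmetric algebra on the perfect complex on $\fM_{\eC}$ classified by $F$; and second, the Brav--Dyckerhoff dictionary \cite{BDII} between $\HH(\eC)\simeq\Map_{\LFun_k}(\Id^{!}_{\eC},\Id_{\eC})$ and the differential forms on $\fM_{\eC}$, under which $\Id_{\eC}$ is classified by $\mcO_{\fM_{\eC}}$ and $\Id^{!}_{\eC}[1]$ by $\bL_{\fM_{\eC}}$; pointwise, the latter is the natural identification $\Hom_{\eC}(\Id^{!}_{\eC}(E)[1],E)\simeq\Hom_{\eC}(E,E)^{\vee}[-1]\simeq\bL_{\fM_{\eC}}|_{E}$ of the datum of a $T_{\eC}(\Id^{!}_{\eC}[1])$-structure on a pseudo-perfect object $E$ with the fibre of the cotangent complex. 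Granting this, $i_0^{*}$ is classified by the zero section $\fM_{\eC}\to\Tot_{\fM_{\eC}}(\bL_{\fM_{\eC}})$; moreover $c\in\HH_0(\eC)$ is classified by a natural function $f\in\Gamma(\fM_{\eC},\mcO_{\fM_{\eC}})$ --- on $A$-points $f$ is the image of $c$ under $\HH_0(\eC)\to\HH_0(\Mod_A)\to A$ --- and since the mixed differential on $\HH(\eC)$ corresponds to the de Rham differential on $\fM_{\eC}$ (again \cite{BDII}), $\bar c$ is classified by the closed $1$-form $\ddr f$, so that $i_{\bar c}^{*}$ is classified by the section $\ddr f\colon\fM_{\eC}\to\Tot_{\fM_{\eC}}(\bL_{\fM_{\eC}})$.

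Combining the two steps identifies $\fM_{\Pi_3(\eC,\delta c)}$ with the fibre product of the zero section and of $\ddr f$ inside $\Tot_{\fM_{\eC}}(\bL_{\fM_{\eC}})$, which is by definition the derived critical locus $\Crit(f)$. For the symplectic structures, recall from \S\ref{ssec:CYstr} that the left $3$-Calabi--Yau structure on $\Pi_3(\eC,\delta c)$ is glued from the relative $2$-Calabi--Yau structure on $i_{\bar c}^{*}$ determined by $\delta c$; hence, by \cite[Theorem 5.5]{BDII} and its relative and gluing refinements, the induced $-1$-shifted symplectic structure on $\fM_{\Pi_3(\eC,\delta c)}$ is the canonical one on the Lagrangian intersection of the zero section and of $\ddr f$ in the $0$-shifted symplectic $\Tot_{\fM_{\eC}}(\bL_{\fM_{\eC}})$, which is exactly the $-1$-shifted symplectic structure carried by $\Crit(f)$. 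The routine parts of this argument are the passage from the pushout to the pullback and the final recognition of $\Crit(f)$; the main obstacle is the second step --- proving that $\fM_{T_{\eC}(-)}$ is given by the relative $\Spec$ construction and that $\Id^{!}_{\eC}[1]$ and the mixed differential are carried to $\bL_{\fM_{\eC}}$ and to $\ddr$, all while tracking the Lagrangian and symplectic data --- which is the core content of \cite{BDII} and \cite{BCS20}.
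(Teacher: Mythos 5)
The paper does not actually prove this statement: it is imported verbatim as \cite[Corollary 6.19]{BCS20}, and the only original content nearby is the pointwise description of $f$ that follows the theorem. So there is no internal proof to compare against; what you have written is a reconstruction of the argument of the cited reference, and it is a correct one. Your outline matches the strategy of \cite{BCS20}: apply $\eD\mapsto\fM_{\eD}$ to the pushout square \eqref{eq:defCY} defining $\Pi_3(\eC,\delta c)$ (using that $\Map_{\Catkcg}(-,\Mod_A)$ turns colimits into limits and that finite-typeness is preserved by \cite{Yeu16}), identify $\fM_{\Pi_2(\eC)}$ with $\Tot_{\fM_{\eC}}(\bL_{\fM_{\eC}})$ via $\Hom(\Id_{\eC}^!(E)[1],E)\simeq\Hom(E,E)^{\vee}[-1]\simeq\bL_{\fM_{\eC}}|_{E}$, match $i_0^*$ and $i_{\bar c}^*$ with the zero section and $\ddr f$, and then invoke the relative Calabi--Yau $\Rightarrow$ Lagrangian dictionary of \cite{BDII} to compare the symplectic structures. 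Your description of $f$ on points also agrees with the one the paper records after the theorem. The only caveat is the one you already flag: the two pillars (the relative $\Spec$ description of $\fM_{T_{\eC}(-)}$ and the translation of $\Id_{\eC}^![1]$ and the mixed differential into $\bL_{\fM_{\eC}}$ and $\ddr$, together with the Lagrangian bookkeeping) are exactly the nontrivial content of \cite{BDII} and \cite{BCS20}, so as written this is a correct proof outline conditional on those inputs rather than a self-contained proof --- which is the same level of detail at which the paper itself treats the statement.
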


For a point $x \in \fM_{\eC}$, the value of the function $f$ is computed as follows:
Recall that $x$ corresponds to a $k$-linear functor $x \colon \eC \to \Mod_k$ which preserves compact objects.
Therefore we obtain a map
\[
\HH(\eC) \to \HH(\Mod_k) \simeq k.
\]
Then the value $f(x) \in k$ is given by the image of $c$ under the above map.

\subsection{Local curve as deformed Calabi--Yau completion}

Let $C$ be a smooth projective curve and $N$ be a rank two vector bundle on $C$ with an isomorphism $\det(N) \cong \omega_C$.
Assume that we are given a short exact sequence
\[
0 \to L_1 \to N \to L_2 \to 0
\]
where $L_1$ and $L_2$ are line bundles.
Write $X \coloneqq \Tot_C(N)$ and $Y \coloneqq \Tot_C(L_2)$.
Then we have the following statement:

\begin{thm}\label{thm:crit}
There exists a function $f$ on the derived moduli stack $\fM_{Y}$ of compactly supported prefect complexes on $Y$ such that there exists a natural equivalence of $-1$-shifted symplectic derived Artin stacks
\[
\fM_{X} \simeq \Crit(f). 
\]

\begin{proof}
Note that $X$ is the total space of an $\omega_Y$-torsor.
We let $c \in H^1(Y, \omega_Y)$ be the class corresponding to the torsor.
Then Theorem \ref{thm:defCYcomp} implies an equivalence of left $3$-Calabi--Yau categories
\[
\Qcoh(X) \simeq \Pi_3(\Qcoh(Y), c).
\]
Now using Theorem \ref{thm:BCS} and the injectivity of the map $\HC^{-}_1(\Qcoh(Y)) \to \HH_1(\Qcoh(Y))$ we have seen in \eqref{eq:HCQcoh2}, it is enough to prove that there exists a Hochschild homology class $c' \in \HH_0(\Qcoh(Y))$ such that we have an equality of the Hochschild homology classes $c = \delta c'$.

Let $\alpha \in \Ext^1(L_2, L_1) \cong H^1(C, \omega_C \otimes L_2^{\otimes^{-2}})$ the class corresponding to the extension $0 \to L_1 \to N \to L_2 \to 0$.
Let $\pi \colon X \to Y$ and $p \colon Y \to C$ be the projection.
Consider the following composition:
\[
\tau \colon H^1(C, \omega_C \otimes L_2^{\otimes^{-2}})
\to H^1(Y, p^* \omega_C) \to H^1(Y, \Omega_Y)
\subset \HH_0(\Qcoh(Y))
\]
Here the first map is defined using the inclusion $\omega_C \otimes L_2^{\otimes^{-2}} \hookrightarrow p_* p^* \omega_C$.
We set $c' \coloneqq 1/2 \cdot \tau(\alpha)$.
Now we claim the equality $c = \delta c'$.
To do this, we first describe the class $c$.
Consider the following short exact sequence:
\[
0 \to \mcO_Y \to \pi_* \mcO_X \to \pi_* \mcO_X/ \mcO_Y \to 0.
\]
This gives a map $\pi_* \mcO_X/ \mcO_Y \to \cO_Y[1]$.
Precomposing the natural map $p^* L_1^{-1} \to \pi_* \mcO_X/ \mcO_Y$ to this, we obtain a map $p^* L_1^{-1} \to \cO_Y[1]$.
This map corresponds to the class $c$.
Note that we have the following map between short exact sequences
\[
\xymatrix{
0 \ar[r] & L_2^{-1} \ar[r] \ar[d] & N^\vee \ar[r] \ar[d] & L_1^{-1} \ar[r] \ar[d] & 0 \\
0 \ar[r] & p_* \mcO_Y \ar[r] & p_* \pi_* \mcO_X \ar[r] & p_*(\pi_* \mcO_X/ \mcO_Y) \ar[r]  & 0.
}
\]
Therefore the composition
\[
p^* L_1^{-1} \xrightarrow[]{p^* \alpha} p^* L_2^{-1}[1] \to  \mcO_Y[1],
\]
corresponds to the class $c$.
Now it is enough to show that $\delta c'$ also corresponds to the above map.
Consider the following diagram:
\[
\xymatrix{
p_* p^* \omega_C \ar[r] & p_* \Omega_Y \ar[r]^-{d_{\mathrm{DR}}} & p_*\omega_Y \ar[r]^-{\sim} & p_* p^* L_1  \\
\omega_C \otimes L_2^{\otimes ^{-2}} \ar[r]^-{\cdot 2} \ar@{^{(}->}[u]  & \omega_C \otimes L_2^{\otimes ^{-2}} \ar[rr]^-
{\sim} & & L_1 \otimes L_2^{{-1}} \ar@{^{(}->}[u].
}
\]
It is clear that this diagram commutes.
Then the claim follows from that fact that the mixed differential corresponds to the de Rham differential under the HKR isomorphism.

\end{proof}

We now describe the function $f$ in the above theorem when we have $\deg(L_2) > 2g(C) - 2$.

\begin{prop} \label{prop:potential}
Let $(E, \phi)$ be a pair of a vector bundle $E$ on $C$ and a map $\phi \colon E \to E\otimes L_2$.
The value of the function $f \colon \fM_Y \to \bA^1$ introduced in Corollary \ref{thm:crit} at $[(E, \phi)] \in \fM_Y$ is
\[
(1/2) \cdot \alpha \cdot \Tr(\phi^2) \in k,
\]
where $\alpha \in \mH^0(C, L_2^{\otimes 2})^\vee \cong \Ext^1(L_2, L_1)$ is the extension class of the short exact sequence $0 \to L_1 \to N \to L_2 \to 0$.
\end{prop}

\end{thm}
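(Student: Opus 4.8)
The plan is to trace through the explicit description of the function $f$ furnished by Theorem \ref{thm:BCS}, and to match it with the Higgs-theoretic expression using the computation of the class $c'$ carried out in the proof of Theorem \ref{thm:crit}. Recall from the discussion after Theorem \ref{thm:BCS} that, for a point $x = [(E,\phi)] \in \fM_Y$ corresponding to a $k$-linear functor $x \colon \Qcoh(Y) \to \Mod_k$, the value $f(x)$ is the image of the Hochschild class $c' \in \HH_0(\Qcoh(Y))$ under the induced map $\HH(x) \colon \HH(\Qcoh(Y)) \to \HH(\Mod_k) \simeq k$. So the task is entirely to compute $\HH(x)(c')$ for the particular class $c' = (1/2)\cdot \tau(\alpha)$.

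First I would unwind the functor $x$: a compactly supported complex on $Y = \Tot_C(L_2)$ with support proper over $C$ is the same data as a coherent sheaf $E$ on $C$ together with a Higgs field $\phi \colon E \to E \otimes L_2$ (equivalently, a $p_* \mcO_Y = \Sym_{\mcO_C}(L_2^{-1})$-module structure), where $p \colon Y \to C$ is the projection. Under the HKR decomposition, $\HH_0(\Qcoh(Y)) \supseteq H^1(Y, \Omega_Y)$, and I would identify the composite
\[
H^1(Y, \Omega_Y) \hookrightarrow \HH_0(\Qcoh(Y)) \xrightarrow{\HH(x)} \HH_0(\Mod_k) \simeq k
\]
with the pairing of a $1$-form against the ``class'' of the complex $x$, i.e. with the map sending a relative-cotangent class to its trace against $\phi$. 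Concretely, the class $\tau(\alpha) \in H^1(Y, \Omega_Y)$ was built from $\alpha \in H^1(C, \omega_C \otimes L_2^{\otimes -2})$ by pulling back along $p$ and using the inclusion $\omega_C \otimes L_2^{\otimes -2} \hookrightarrow p_* p^* \omega_C$; after pairing against the Higgs complex, the pull-back $p^*$ contributes multiplication by $\phi$ and the relative-cotangent direction on $Y$ (a copy of $p^* L_2^{-1}$) contributes a second factor of $\phi$. Thus $\HH(x)(\tau(\alpha))$ becomes the cup product of $\alpha$ with $\Tr(\phi^2) \in H^0(C, L_2^{\otimes 2})$, viewed under the duality $H^0(C, L_2^{\otimes 2})^\vee \cong \Ext^1(L_2, L_1) \cong H^1(C, \omega_C \otimes L_2^{\otimes -2})$ (which uses Serre duality on $C$ and $\det N \cong \omega_C$). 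Multiplying by the factor $1/2$ from the definition of $c'$ yields $(1/2)\cdot \alpha \cdot \Tr(\phi^2)$, as claimed.

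The main obstacle I anticipate is making the second paragraph precise: correctly identifying, at the level of the HKR-decomposed Hochschild complex, the map $\HH(x)$ on the summand $H^1(Y,\Omega_Y)$ and checking that feeding in $\tau(\alpha)$ produces exactly $\Tr(\phi^2)$ rather than $\Tr(\phi)^2$ or some other contraction. This amounts to a careful bookkeeping of how the Higgs field enters through the two occurrences of the relative one-form $L_2^{-1}$ (one from the $\cdot 2$ map $\omega_C \otimes L_2^{\otimes -2} \to \omega_C \otimes L_2^{\otimes -2}$ that already appeared in the proof of Theorem \ref{thm:crit}, one from the de Rham differential along the fibre), and one must verify the combinatorial factor of $2$ is absorbed correctly so that the final answer carries the stated $1/2$. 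The assumption $\deg L_2 > 2g(C) - 2$ is used to guarantee that $H^1(C, L_2) = 0$, so that every object of $\fM_Y$ over a field is (up to the relevant stratification) of the form $(E, \phi)$ with $E$ a vector bundle, legitimising the reduction to this case; I would invoke this at the outset to restrict attention to such pairs. Once the identification of $\HH(x)$ on the HKR summand is in hand, the rest is the cup-product computation sketched above, which is routine.
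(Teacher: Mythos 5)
Your setup is right: the value $f([(E,\phi)])$ is the image of $c'=(1/2)\tau(\alpha)$ under the map $\HH_0(\Qcoh(Y))\to\HH_0(\Mod_k)\simeq k$ induced by the functor attached to the point, and the answer should come out as a cup product against $\Tr(\phi^2)$. But the step you yourself flag as ``the main obstacle'' --- identifying this induced map on the HKR summand $H^1(Y,\Omega_Y)$ for an \emph{arbitrary} Higgs pair and checking that it contracts $\tau(\alpha)$ against $\phi$ twice --- is exactly the content of the proposition, and your proposal does not supply the idea that makes it tractable. The paper does not attempt a direct computation for general $(E,\phi)$. Instead it first reduces to the case where the corresponding one-dimensional sheaf on $Y$ is $i_*L$ for a \emph{smooth} spectral curve $i\colon\widetilde{C}\hookrightarrow Y$ and $L$ a line bundle: one passes to the associated graded of the Harder--Narasimhan filtration (which does not change $f$), uses additivity to reduce to a single semistable object, and then uses the local irreducibility of $\fM_Y^{\mathrm{ss}}$ from \cite[Proposition 2.9]{MS20} to deduce that the smooth-support locus is dense, so that the (locally constant-in-families ingredients of the) computation may be done there. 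After a further twist to arrange $L=\omega_{\widetilde{C}}$, the functor becomes $(\widetilde{C}\to\Spec k)_*\circ i^*$, and the Ben-Zvi--Nadler nonlinear trace formalism \cite[Theorem 2.21, Remark 2.22]{BN13} identifies the induced map on $H^1(Y,\Omega_Y)$ as $\int_{\widetilde{C}}i^*(-)$; the factor $\Tr(\phi^2)$ then drops out of an explicit commutative diagram comparing $\pi_*i_*\omega_{\widetilde{C}}$ with $\omega_C\otimes L_2^{\otimes -2}$. Without this chain of reductions your proposed ``pairing of a $1$-form against the class of the complex'' has no justification for a general compactly supported complex, and distinguishing $\Tr(\phi^2)$ from $\Tr(\phi)^2$ is precisely what you cannot do at that level of precision.

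A secondary but concrete error: you locate the use of the hypothesis $\deg(L_2)>2g(C)-2$ in guaranteeing that every object of $\fM_Y$ is a vector bundle with a Higgs field. That description of compactly supported sheaves on $\Tot_C(L_2)$ as $\Sym_{\mcO_C}(L_2^{-1})$-modules holds with no degree assumption; the hypothesis is needed for the Maulik--Shen irreducibility result, i.e.\ for the density of the smooth-spectral-curve locus on which the whole reduction rests.
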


\begin{proof}
Let $(\mathrm{gr}_{\mathrm{HN}}(E), (\mathrm{gr}_{\mathrm{HN}}(\phi))$ be the associated graded of the Harder--Narashimhan filtration of $(E, \phi)$.
It is clear that the value of $f$ at $[(E, \phi)]$ and $[(\mathrm{gr}_{\mathrm{HN}}(E), (\mathrm{gr}_{\mathrm{HN}}(\phi))]$ is equal.
Therefore we may assume that $(E, \phi)$ is a direct sum of semistable objects.
Using the additivity of $f$, we may further assume $(E, \phi)$ is semistable.
Furthermore, the local irreducibility of $\fM_Y^{\mathrm{ss}}$ proved in \cite[Proposition 2.9]{MS20} implies that the locus $\mcU \subset \fM_Y^{\mathrm{ss}}$ consisting of sheaves with smooth supports on $S$ is dense in $\fM_Y^{\mathrm{ss}}$.
Therefore we may assume that the one-dimensional sheaf $\widetilde{E}$ on $Y$ corresponding to $(E, \phi)$ is of the form $i_* L$, where $i \colon \widetilde{C} \hookrightarrow Y$ is a closed immersion from a smooth projective curve and $L$ is a line bundle on $\widetilde{C}$. 

What we need to compute is the value at $c \in \HH_0(\Qcoh(Y))$ of the function
\[
 \HH_0(\Qcoh(Y)) \to \HH_0(\Mod_k) \cong k
\]
induced by the functor $\Hom_{Y}(i_*L, -)$.
Since tensoring line bundles on $\widetilde{C}$ acts trivially on $\mH^0(\widetilde{C}, \mcO_{\widetilde{C}}) \subset \HH_0(\widetilde{C})$,
we may assume $L = \omega_{\widetilde{C}}$ and what we need to compute is the image of $c \in \HH_0(\Qcoh(Y))$ under the map
\[
\HH_0(\Qcoh(Y)) \xrightarrow[]{i^*} \HH_0(\Qcoh(\widetilde{C})) \xrightarrow[]{(\widetilde{C} \to \Spec k)_*} k. 
\]
Here the class $c'$ is defined in the proof of the previous theorem.
It follows from \cite[Theorem 2.21, Remark 2.22]{BN13} that the composite of the inclusion $\mH^1(Y, \Omega_Y) \hookrightarrow \HH_0(\Qcoh(Y))$ and the above map is given by the following composition:
\[
\mH^1(Y, \Omega_Y) \xrightarrow[]{i^*} \mH^1(\widetilde{C}, \omega_{\widetilde{C}}) \xrightarrow[]{\int_{\widetilde{C}}} k.
\]

Now consider the following diagram:
\[
\xymatrix{
\pi_* \pi^* \omega_C \ar[r] & \pi_* i_* i^* \pi^* \omega_C \ar[r] & \pi_* i_* \omega_{\widetilde{C}} \ar[d]  \\
\omega_C \otimes L_2^{\otimes^{-2}} \ar@{^{(}->}[u] \ar[rr]^-{\Tr(\phi^2)} & &  \omega_C. }
\]

It is clear that the above diagram commutes.
This implies that the following diagram commutes:
\[
\xymatrix{
\mH^1(Y, \pi^* \omega_C) \ar[r]& \mH^1(Y, \Omega_Y) \ar[r]^-{i^*} & \mH^1(\widetilde{C}, \omega_{\widetilde{C}}) \ar[r]_-{\sim}^-{\int_{\widetilde{C}}}  \ar[d] & k \ar@{=}[d] \\
\mH^1(C, \omega_C \otimes L_2^{\otimes^{-2}}) \ar[rr]^-{\cdot \Tr(\phi^2)} \ar@{^{(}->}[u] & & \mH^1(C, \omega_C) \ar[r]^-{\int_{C}}_-{\sim} & k
}
\]

Thus we obtain the desired identity
\[
\int_{\widetilde{C}} (i^* c') = (1/2) \cdot \alpha \cdot \Tr(\phi^2).  
\]

\end{proof}

\newcommand{\etalchar}[1]{$^{#1}$}
\providecommand{\bysame}{\leavevmode\hbox to3em{\hrulefill}\thinspace}
\providecommand{\MR}{\relax\ifhmode\unskip\space\fi MR }
\providecommand{\MRhref}[2]{%
  \href{http://www.ams.org/mathscinet-getitem?mr=#1}{#2}
}
\providecommand{\href}[2]{#2}


\begin{thebibliography}{BBD{\etalchar{+}}15}

\bibitem[BBD{\etalchar{+}}15]{BBDJS15}
C.~Brav, V.~Bussi, D.~Dupont, D.~Joyce, and B.~Szendr\H{o}i, \emph{Symmetries
  and stabilization for sheaves of vanishing cycles}, J. Singul. \textbf{11}
  (2015), 85--151, With an appendix by J\"{o}rg Sch\"{u}rmann. \MR{3353002}

\bibitem[BCS]{BCS20}
T.~Bozec, D.~Calaque, and S.~Scherotzke, \emph{Relative critical loci and
  quiver moduli}, arXiv preprint arXiv:2006.01069 (2020).

\bibitem[BD]{BDII}
C.~Brav and T.~Dyckerhoff, \emph{Relative {C}alabi-{Y}au structures ii: Shifted
  {L}agrangians in the moduli of objects}, arXiv preprint arXiv:1812.11913
  (2018).

\bibitem[BD19]{BDI}
\bysame, \emph{Relative {C}alabi-{Y}au structures}, Compos. Math. \textbf{155}
  (2019), no.~2, 372--412. \MR{3911626}

\bibitem[BZFN10]{BFN10}
David Ben-Zvi, John Francis, and David Nadler, \emph{Integral transforms and
  {D}rinfeld centers in derived algebraic geometry}, J. Amer. Math. Soc.
  \textbf{23} (2010), no.~4, 909--966. \MR{2669705}

\bibitem[BZN]{BN13}
D.~Ben-Zvi and D.~Nadler, \emph{Nonlinear traces}, arXiv preprint
  arXiv:1305.7175 (2013).

\bibitem[Coh]{Coh13}
L.~Cohn, \emph{Differential graded categories are k-linear stable infinity
  categories}, arXiv preprint arXiv:1308.2587 (2013).

\bibitem[Dav18]{Dav18}
Ben Davison, \emph{Positivity for quantum cluster algebras}, Ann. of Math. (2)
  \textbf{187} (2018), no.~1, 157--219. \MR{3739230}

\bibitem[DM20]{DM20}
B.~Davison and S.~Meinhardt, \emph{Cohomological {D}onaldson-{T}homas theory of
  a quiver with potential and quantum enveloping algebras}, Invent. Math.
  \textbf{221} (2020), no.~3, 777--871. \MR{4132957}

\bibitem[GR17]{GR17}
Dennis Gaitsgory and Nick Rozenblyum, \emph{A study in derived algebraic
  geometry. {V}ol. {I}. {C}orrespondences and duality}, Mathematical Surveys
  and Monographs, vol. 221, American Mathematical Society, Providence, RI,
  2017. \MR{3701352}

\bibitem[Hit19]{Hit19}
Nigel Hitchin, \emph{Critical loci for Higgs bundles}, Communications in Mathematical Physics 366.2 (2019): 841--864. \MR{3922539}.

\bibitem[HSS17]{HSS17}
Marc Hoyois, Sarah Scherotzke, and Nicol\`o Sibilla, \emph{Higher traces,
  noncommutative motives, and the categorified {C}hern character}, Adv. Math.
  \textbf{309} (2017), 97--154. \MR{3607274}

\bibitem[IQ]{IQ18}
A.~Ikeda and Y.~Qiu, \emph{$q$-stability conditions on
  {C}alabi-{Y}au-$\mathbb{X}$ categories and twisted periods}, arXiv preprint
  arXiv:1807.00469 (2018).

\bibitem[Joy15]{Joy15}
D.~Joyce, \emph{A classical model for derived critical loci}, Journal of
  Differential Geometry \textbf{101} (2015), no.~2, 289--367.

\bibitem[Kel11]{Kel11}
B.~Keller, \emph{Deformed {C}alabi-{Y}au completions}, J. Reine Angew. Math.
  \textbf{654} (2011), 125--180, With an appendix by Michel Van den Bergh.
  \MR{2795754}

\bibitem[Kel]{Kel18}
Bernhard Keller, \emph{Erratum to" deformed calabi-yau completions"}, arXiv
  preprint arXiv:1809.01126 (2018).

\bibitem[Kin]{Kin21}
T.~Kinjo, \emph{Dimensional reduction in cohomological {D}onaldson-{T}homas
  theory}, arXiv preprint arXiv:2102.01568. To appear in Compos. Math.

\bibitem[KK]{KK21}
T~Kinjo and K~Koseki, \emph{Cohomological $\chi$-independence for {H}iggs
  bundles and {G}opakumar--{V}afa invariants}, arXiv preprint (2021).

\bibitem[KS11]{KS11}
Maxim Kontsevich and Yan Soibelman, \emph{Cohomological {H}all algebra,
  exponential {H}odge structures and motivic {D}onaldson-{T}homas invariants},
  Commun. Number Theory Phys. \textbf{5} (2011), no.~2, 231--352. \MR{2851153}

\bibitem[Lod98]{Lod98}
Jean-Louis Loday, \emph{Cyclic homology}, second ed., Grundlehren der
  mathematischen Wissenschaften [Fundamental Principles of Mathematical
  Sciences], vol. 301, Springer-Verlag, Berlin, 1998, Appendix E by Mar\'{\i}a
  O. Ronco, Chapter 13 by the author in collaboration with Teimuraz Pirashvili.
  \MR{1600246}

\bibitem[HTT]{HTT}
J.~Lurie, \emph{Higher topos theory}, Annals of Mathematics Studies, vol. 170,
  Princeton University Press, Princeton, NJ, 2009. \MR{2522659}

\bibitem[HA]{HA}
\bysame, \emph{Higher algebra (2017)}, Preprint, available at http://www. math.
  harvard. edu/\~{} lurie (2016).

\bibitem[SAG]{SAG}
\bysame, \emph{Spectral algebraic geometry}, preprint (2018).

\bibitem[MS20]{MS20}
D.~Maulik and J.~Shen, \emph{Cohomological $\chi$-independence for moduli of
  one-dimensional sheaves and moduli of {H}iggs bundles}, arXiv preprint
  arXiv:2012.06627 (2020).
  
\bibitem[MS21]{MS21}
D.~Maulik and J.~Shen, \emph{Endoscopic decompositions and the Hausel–Thaddeus conjecture}, Forum of Mathematics, Pi. Vol. 9. Cambridge University Press, 2021.

\bibitem[PTVV13]{PTVV13}
T.~Pantev, B.~To\"{e}n, M.~Vaqui\'{e}, and G.~Vezzosi, \emph{Shifted symplectic
  structures}, Publ. Math. Inst. Hautes \'{E}tudes Sci. \textbf{117} (2013),
  271--328. \MR{3090262}

\bibitem[Tab05]{TabM}
G.~Tabuada, \emph{Invariants additifs de {DG}-cat\'{e}gories}, Int. Math. Res.
  Not. (2005), no.~53, 3309--3339. \MR{2196100}

\bibitem[TV07]{TV07}
B.~To\"{e}n and M.~Vaqui\'{e}, \emph{Moduli of objects in dg-categories}, Ann.
  Sci. \'{E}cole Norm. Sup. (4) \textbf{40} (2007), no.~3, 387--444.
  \MR{2493386}

\bibitem[Yan]{Yan21}
Lior Yanovski, \emph{The monadic tower for $\infty$-categories}, arXiv preprint
  arXiv:2104.01816.

\bibitem[Yeu16]{Yeu16}
W.~Yeung, \emph{Relative {C}alabi-{Y}au completions}, arXiv preprint
  arXiv:1612.06352 (2016).

\end{thebibliography}
\end{document}